\documentclass[a4paper,10pt]{article}

\usepackage{amssymb}
\usepackage{amsmath}
\usepackage{amsthm}
\usepackage{multicol}
\usepackage{url}

\sloppy 

\pagestyle{plain}
\setlength{\oddsidemargin}{0.25in}
\setlength{\textwidth}{6in}
\setlength{\topmargin}{-0.25in}
\setlength{\headheight}{0.3in}
\setlength{\headsep}{0.2in}
\setlength{\textheight}{9in}
\setlength{\footskip}{0.4in}

\numberwithin{equation}{section}

\theoremstyle{plain}
\newtheorem{theorem}{Theorem}[section]
\newtheorem{lemma}{Lemma}[section]

\theoremstyle{definition}
\newtheorem{definition}{Definition}[section]
\newtheorem{algorithm}{\normalfont\textsc{Algorithm}}
\newtheorem{assumption}{Assumption A.\!\!}

\newtheorem{scheme}{\noindent\normalfont\textsc{Scheme A.\!\!}}
\theoremstyle{remark}
\newtheorem{remark}{Remark}[section]

\newcommand{\h}{\mathcal{H}}

\newcommand{\norm}[1]{\|#1\|}

\newcommand{\eofproof}{\hfill$\square$}

\renewcommand{\int}{\mathrm{int}}

\title{Combining Convex-Concave Decompositions and Linearization Approaches for solving BMIs, with application to Static Output Feedback}
\author{Tran Dinh Quoc\footnote{Department of Electrical Engineering (ESAT-SCD) and Optimization in Engineering Center (OPTEC), K.U. Leuven, Kasteelpark Arenberg 10, B-3001
Leuven, Belgium. Email: \{quoc.trandinh,moritz.diehl\}@ esat.kuleuven.be} {~\LARGE{$\cdot$}} Suat Gumussoy\footnote{Department of Computer Science, Katholieke
Universiteit Leuven, Leuven, Belgium. Email: \{suat.gumussoy, wim.michiels\}@cs.kuleuven.be} {~\LARGE$\cdot$} Wim~Michiels\footnotemark[2] {~\LARGE$\cdot$}
Moritz Diehl\footnotemark[1]}
\date{Technical Report, July 2011}

\begin{document}

\maketitle

\begin{abstract}
A novel optimization method is proposed to minimize a convex function subject to bilinear matrix inequality (BMI) constraints. The key idea is to decompose the
bilinear mapping as a difference between
two positive semidefinite convex mappings. At each iteration of the algorithm the concave part is linearized, leading to a convex subproblem.Applications to
various output feedback controller
synthesis problems are presented.  In these applications the subproblem in each iteration step can be turned into  a convex optimization problem with linear
matrix inequality (LMI) constraints.   The
performance of the algorithm has been benchmarked on the data from COMPl$_\textrm{e}$ib library.

\vskip 0.1cm
\noindent\textbf{Keywords: }{Static feedback controller design, linear time-invariant system, bilinear matrix inequality, semidefinite programming,
convex-concave decomposition.}
\end{abstract}

\section{Introduction}
Optimization involving matrix constraints have broad interest and applications in static state/output feedback controller design, robust stability of
systems, topology optimization (see, e.g. \cite{BenTal2004,Boyd1994,Leibfritz2004,Kocvara2005}).
Many problems in these fields can be reformulated as an optimization problem of linear matrix inequality (LMI) constraints \cite{Boyd1994,Leibfritz2004} which
can be solved efficiently and reliably by means of interior point methods for semidefinite programming (SDP) \cite{BenTal2004,Nesterov1994} and efficient
open-source software tools such as Sedumi \cite{Sturm1999}, SDPT3
\cite{Tutunku2001}.
However, solving optimization problems involving nonlinear matrix inequality constraints is still a big challenge in practice. The methods and algorithms for
nonlinear matrix constrained optimization problems are still limited \cite{Correa2004,Freund2007,Kocvara2005}.

In control theory, many problems related to the design of a reduced-order controller can be conveniently  reformulated as a feasibility problem or an
optimization problem with bilinear matrix inequality (BMI) constraints by means of, for instance, Lyapunov's theory.
The BMI constraints make the problems much more difficult than the LMI ones due to their nonconvexity and possible nonsmoothness. It has been shown in
\cite{Blondel1997} that the optimization problems involving BMI are NP-hard.
Several approaches to solve optimization problems with BMI constraints have been proposed.
For instance, Goh \textit{et al} \cite{Goh1995} considered problems in robust control by means of BMI optimization using global optimization methods. 
Hol \textit{et al} in \cite{Hol2005} proposed to used a sum-of-squares approach to fixed order $H$-infinity synthesis. Apkarian and Tuan \cite{Apkarian1998}
proposed local and global methods for solving BMIs also based on techniques of global optimization. These authors further considered this problem by proposing
parametric formulations and difference of two convex functions (DC) programming approaches. A similar approach can be found in \cite{Alamo2008}.
However, finding a global optimum is in general impractical while global optimization methods are usually recommended to a low dimensional problem. Our method
developed in this paper is classified as a local optimization method which aims as finding a local optimum based on solving a sequence of convex semidefinite
programming problems. 
Sequential semidefinite programming method for nonlinear SDP and its application to robust control was considered by Fares \textit{et al} in \cite{Fares2002}.
Thevenet \textit{et al} \cite{Thevenet2006} studied spectral SDP methods for solving problems involving BMI arising in controller design.
Another approach is based on the fact that problems with BMI constraints can be reformulated as problems with LMI constraints with additional rank constraints.
In  \cite{Orsi2006} Orsi \textit{et al}
developed  a Newton-like method for solving problems of this type.

In this paper, we are interested in optimization problems arising in static output feedback controller design for a linear, time-invariant system of the form:
\begin{equation}\label{eq:LTI}
\left\{\begin{array}{cl}
&\dot{x} = Ax + B_1w + Bu,\\
&z = C_1x + D_{11}w + D_{12}u,\\
&y = Cx + D_{21}w,
\end{array}\right.
\end{equation}
where $x\in\mathbb{R}^{n}$ is state vector, $w\in\mathbb{R}^{n_w}$ is the performance input, $u\in\mathbb{R}^{n_u}$ is input vector, $z\in\mathbb{R}^{n_z}$ is
the performance output,
$y\in\mathbb{R}^{n_y}$ is physical output vector, $A\in\mathbb{R}^{n\times n}$ is state matrix,
$B\in\mathbb{R}^{n\times n_u}$ is input matrix and $C\in\mathbb{R}^{n_y\times n}$ is the output matrix.
Using a static feedback controller of the form $u = Fy$ with $F\in\mathbb{R}^{n_u\times n_y}$, we can write the closed-loop system as follows:
\begin{equation}\label{eq:ss_LTI}
\left\{\begin{array}{cl}
\dot{x}_F = A_Fx_F + B_Fw,\\
z = C_Fx_F + D_Fw.
\end{array}\right.
\end{equation}
The stabilization, $\h_2$, $\h_\infty$ optimization and other control problems for this closed-loop system will be considered.

\vskip 0.1cm
\noindent\textbf{Contribution. }
Many control problems can be expressed as optimization problems of BMI constraints and these optimization problems can conveniently be  reformulated as
optimization problems of difference of two
\textit{positive semidefinite convex} (psd-convex) mappings (or convex-concave decomposition) constraints (see Definition \ref{de:psd_convex} below). In this
paper, we propose to use this
reformulation leading to a new local optimization method for solving some classes of optimization problems involving BMI constraints. We provide a practical
algorithm and prove the convergence of the
algorithm under certain standard assumptions.

The algorithm proposed in this paper is very simple to implement by using available SDP software tools. Moreover, it does not require any globalization strategy
such as line-search procedures to
guarantee global convergence to a local minimum. The method still works in practice for nonsmooth optimization problems, where the objective function and the
concave parts are only subdifferentiable, but not necessarily differentiable. 
Note that our method is different from the standard DCA approach in \cite{Pham1998} since we work directly with positive semidefinite matrix inequality
constraints instead of transforming into DC representations as in \cite{Apkarian1998}.

We show that our method is applicable to many control problems in static state/output feedback controller design. The numerical results are benchmarked using
the data from COMPl$_\textrm{e}$ib
library. Note, however, that this method is also applicable to other nonconvex optimization problems with matrix inequality constraints
which can be written as a convex-concave decomposition.

\vskip 0.1cm
\noindent\textbf{Outline of the paper. }
The remainder of the paper is organized as follows. Section~\ref{sec:premi} provides some preliminary results which will be used in what follows.
Section~\ref{sec:MDCP} presents the formulation of
optimization problems involving convex-concave matrix inequality constraints and a fundamental assumption, Assumption A\ref{as:A1}. The algorithm and its
convergence results are presented in
Section~\ref{sec:SSDP}. Applications to control problems on static feedback controller design and numerical benchmarking are given in
Section~\ref{sec:applications}. The last section contains some
concluding remarks.

\section{Preliminaries}\label{sec:premi}
Let $\mathcal{S}^p$ be the set of symmetric matrices of size $p\times p$, $\mathcal{S}^p_{+}$, and resp., $\mathcal{S}^p_{++}$ be the set of symmetric positive
semidefinite, resp., symmetric positive
definite matrices. For
given matrices $X$ and $Y$ in $\mathcal{S}^p$, the relation $X\succeq Y$ (resp., $X\preceq Y$) means that $X-Y\in\mathcal{S}^p_{+}$ (resp.,
$Y-X\in\mathcal{S}^p_{+}$) and $X\succ Y$ (resp., $X\prec
Y$)
is $X-Y\in\mathcal{S}^p_{++}$ (resp., $Y-X\in\mathcal{S}^p_{++}$). The quantity $X\circ Y := \textrm{trace}(X^TY)$ is an inner product of two matrices $X$ and
$Y$ defined on $\mathcal{S}^p$, where
$\textrm{trace}(Z)$ is the trace of matrix $Z$.
\begin{definition}\label{de:psd_convex}\cite{Shapiro1997}
A matrix-valued mapping $G : \mathbb{R}^n\to\mathcal{S}^p$ is said to be positive semidefinite convex (\textit{psd-convex}) on a convex subset
$C\subseteq\mathbb{R}^n$ if for all $t\in[0,1]$ and
$x,y\in C$, one has
\begin{equation}\label{eq:psd_convex}
G(tx+(1-t)y) \preceq tG(x) + (1-t)G(y).
\end{equation}
If \eqref{eq:psd_convex} holds true for $\preceq$ instead of $\prec$ and $t\in (0,1)$ then $G$ is said to be \textit{strictly psd-convex} on $C$.
Alternatively, if we replace $\preceq$ in \eqref{eq:psd_convex} by $\succeq$ then $G$ is said to be psd-concave on $C$.
It is obvious that any convex function $f : \mathbb{R}^n\to\mathbb{R}$ is psd-convex ($p=1$).
\end{definition}
A function $f :\mathbb{R}^n\to\mathbb{R}$ is said to be \textit{strongly convex} with the parameter $\rho > 0$ if $f(\cdot) - \frac{1}{2}\rho\norm{\cdot}^2$ is
convex.

The derivative of a matrix-valued mapping $G$ at $x$ is a linear mapping $DG$ from $\mathbb{R}^n$ to $\mathbb{R}^{p\times p}$ which is defined by
\begin{equation*}
DG(x)h := \sum_{i=1}^nh_i\frac{\partial{G}}{\partial x_i}(x), ~\forall h\in \mathbb{R}^n.
\end{equation*}
\vskip -0.1cm
For a given convex set $X\in\mathbb{R}^n$, the matrix-valued mapping $G$ is said to be differentiable on a subset $X$ if its derivative $DG(x)$ exists at every
$x\in X$. The definitions
of the second order derivatives of matrix-valued mappings can be found, e.g., in \cite{Shapiro1997}.
Let $A : \mathbb{R}^n\to \mathcal{S}^p$ be a linear mapping defined as $Ax = \sum_{i=1}^nx_iA_i$, where $A_i \in \mathcal{S}^p$ for $i=1,\dots, n$.
The adjoint operator of $A$, $A^{*}$,  is defined as $A^{*}Z = (A_1\circ Z, A_2\circ Z, \dots, A_n\circ Z)^T$ for any $Z\in\mathcal{S}^p$.

\begin{lemma}\label{le:psd_convex_properties}
\begin{itemize}
\item[a)] A matrix-valued mapping $G$ is psd-convex on $X$ if and only if for any $v\in\mathbb{R}^p$ the function $\varphi(x) := v^TG(x)v$ is convex on $X$.
\item[b)] A mapping $G$ is psd-convex on $X$ if and only if for all $x$ and $y$ in $X$, one has
\begin{equation}\label{eq:psd_convex_pro1}
G(y) - G(x) \succeq DG(x)(y-x).
\end{equation}
\end{itemize}
\end{lemma}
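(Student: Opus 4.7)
The plan is to reduce both statements to the corresponding scalar facts about real-valued convex functions, using the characterization of the semidefinite order via quadratic forms: for any $M,N\in\mathcal{S}^p$, one has $M\succeq N$ if and only if $v^TMv\ge v^TNv$ for every $v\in\mathbb{R}^p$.

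For part (a), I would fix $x,y\in X$ and $t\in[0,1]$, apply the quadratic-form characterization above to the matrices $G(tx+(1-t)y)$ and $tG(x)+(1-t)G(y)$, and note that the inequality
\begin{equation*}
v^T G(tx+(1-t)y) v \le t\, v^T G(x) v + (1-t)\, v^T G(y) v
\end{equation*}
is precisely the statement that the scalar function $\varphi_v(x):=v^TG(x)v$ is convex on $X$. Since this is required to hold for all $v\in\mathbb{R}^p$, the equivalence with psd-convexity of $G$ follows immediately.

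For part (b), I would again pass through the scalar functions $\varphi_v$. By part (a), $G$ is psd-convex on $X$ if and only if every $\varphi_v$ is convex on $X$. Since $\varphi_v$ is differentiable whenever $G$ is, and its gradient satisfies $\nabla\varphi_v(x)^Th = v^T (DG(x)h) v$ (a direct computation from the definition of $DG$ given just before the lemma), the standard first-order characterization of convexity for real-valued differentiable functions gives
\begin{equation*}
\varphi_v(y) - \varphi_v(x) \ge \nabla\varphi_v(x)^T(y-x) = v^T\bigl(DG(x)(y-x)\bigr)v
\end{equation*}
for all $x,y\in X$ and all $v$. Rewriting this as $v^T[G(y)-G(x)-DG(x)(y-x)]v\ge 0$ for every $v\in\mathbb{R}^p$ and invoking the quadratic-form characterization once more yields $G(y)-G(x)\succeq DG(x)(y-x)$, and conversely this inequality implies convexity of each $\varphi_v$.

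The only mildly delicate point is justifying the gradient formula $\nabla\varphi_v(x)^Th = v^T DG(x)h\, v$, which requires the differentiability of $G$ (implicit in writing $DG(x)$) and linearity of the inner product in $G$; I would note this in one line. The rest is a bookkeeping exercise converting vector-valued (matrix) inequalities into families of scalar ones indexed by $v$.
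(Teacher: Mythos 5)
Your proof is correct and follows essentially the same route as the paper: part (b) is handled exactly as in the paper's proof, by reducing to the scalar functions $\varphi_v(x):=v^TG(x)v$, using the first-order characterization of convexity of differentiable functions together with the gradient identity $\nabla\varphi_v(x)^Th=v^T\bigl(DG(x)h\bigr)v$, and invoking part (a) for the converse direction. The only difference is that you prove part (a) directly from the quadratic-form characterization of $\succeq$, whereas the paper simply cites \cite{Shapiro1997} for it; your one-line argument is the standard one and harmlessly makes the lemma self-contained.
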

\begin{proof}
The proof of the statement a) can be found in \cite{Shapiro1997}. We prove b). Let $\varphi(x) = v^TG(x)v$ for any $v\in\mathbb{R}^p$.
If $G$ is psd-convex then $\varphi$ is convex. We have $\varphi(y) - \varphi(x) \geq \nabla\varphi(x)^T(y-x)$. Now, $\nabla\varphi(x)^T(y-x) =
\sum_{i=1}^n(y_i-x_i)v^T\frac{\partial{G}}{\partial{x}_i}(x)v = v^T[DG(x)(y-x)]v$.  
Hence, $v^T\left[G(y)-G(x)-DG(x)\right.$ $\left.(y-x)\right]v \geq 0$ for all $v$. We conclude that \eqref{eq:psd_convex_pro1} holds.
Conversely, if \eqref{eq:psd_convex_pro1} holds then, for any $v$, we have $v^T\left[G(y) - G(x) - DG(x)(y-x)\right]v \geq 0$, which is equivalent to
$\varphi(y) - \varphi(x) \geq
\nabla\varphi(x)^T(y-x)$. Thus $\varphi$ is convex. By virtue of a), the mapping $G$ is psd-convex.
\end{proof}

For simplicity of discussion, throughout this paper, we assume that all the functions and matrix-valued mappings are \textit{twice differentiable} on their
domain \cite{Shapiro1997,Thevenet2006}.
However, this assumption can be reduced to the \textit{subdifferentiability} of the objective function and the concave parts of the matrix-valued mappings.
\begin{definition}\label{de:DC_decomp}
A matrix-valued mapping $F :\mathbb{R}^n\to\mathcal{S}^p$ is said to be a \textit{psd-convex-concave} mapping if $F$ can be represented as a difference of two
psd-convex mappings, i.e.
$F(x) = G(x) - H(x)$, where $G$ and $H$ are psd-convex.
The pair $(H,G)$ is called a psd-DC (or psd-convex-concave) decomposition of $F$.
\end{definition}
Note that each given psd-convex-concave mapping possesses many psd-convex-concave decompositions.

\section{Optimization of convex-concave matrix inequality constraints}\label{sec:MDCP}

\subsection{Psd-convex-concave decomposition of BMIs}
Instead of using the vector $x$ as a decision variable, we use from now on the matrix $X$ as a matrix variable in $\mathbb{R}^{m\times n}$.
Note that any matrix $X$ can be considered as an $m\times n$-column vector by vectorizing with respect to its columns, i.e. $x = \textrm{vec}(X) := (X_{11},
X_{21}, \dots, X_{mn})^T$. The inverse
mapping of $\textrm{vec}$ is called $\textrm{mat}$.
Since $\textrm{vec}$ and $\textrm{mat}$ are linear operators, the psd-convexity is still preserved under these operators.

A mapping $F : \mathbb{R}^{p\times q}\times\mathcal{S}^{p} \to \mathcal{S}^p$ given by  $F(X,Y) := XQ^{-1}X^T - Y$, where $Q\in\mathcal{S}^q$ is symmetric
positive definite, is called a
\textit{Schur psd-convex}\footnote{Due to Schur's complement form} mapping.

Consider a bilinear matrix form
\begin{equation}\label{eq:bilinearform}
F(X, Y) := X^TY + Y^TX.
\end{equation}
By using the Kronecker product, we can write $F$ as $\textrm{vec}(F(X,Y)) = (I_n\otimes X^T)\textrm{vec}(Y) + (I_y\otimes Y^T)\textrm{vec}(X) =
(\sum_{i,j}x_iy_j)$, where $I_n$, $I_y$ are appropriate
identity matrices, $\otimes$ denotes the Kronecker product. Hence, the vectorization of $F(X,Y)$ is indeed a bilinear form of two vectors $x := \textrm{vec}(X)$
and $y:=\textrm{vec}(Y)$.

The following lemma shows that the bilinear matrix form \eqref{eq:bilinearform} can be decomposed as a difference of two psd-convex mappings.
\begin{lemma}\label{le:key_lemma_a}
\begin{itemize}
\item[a)] The mapping $f(X) := X^TX$, $g(X) := XX^T$) are psd-convex on $\mathbb{R}^{m\times n}$. The mapping $f(X) := X^{-1}$ is psd-convex on
$\mathcal{S}^{p}_{++}$.
\item[b)] The bilinear matrix form $X^TY + Y^TX$ can be represented as a psd-convex-concave mapping of at least three forms:
\begin{align}\label{eq:DC_form}
X^TY + Y^TX & =  (X+Y)^T(X+Y) - (X^TX + Y^TY)\nonumber\\
& = X^TX + Y^TY - (X-Y)^T(X-Y)\\ 
& = \frac{1}{2}[(X+Y)^T(X+Y) - (X-Y)^T(X-Y)].\nonumber
\end{align}
\end{itemize}
\end{lemma}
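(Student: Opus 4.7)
For part (a) I would invoke Lemma~\ref{le:psd_convex_properties}(a), which reduces psd-convexity of $G$ to ordinary convexity of the scalar function $\varphi(X) := v^TG(X)v$ for every fixed vector $v$. For $f(X) = X^TX$ on $\mathbb{R}^{m\times n}$, I get $\varphi(X) = v^TX^TXv = \|Xv\|_2^2$, which is the composition of the convex scalar function $\|\cdot\|_2^2$ with the linear map $X \mapsto Xv$, hence convex. The case $g(X) = XX^T$ is identical with $\varphi(X) = \|X^Tv\|_2^2$. For $f(X) = X^{-1}$ on $\mathcal{S}^p_{++}$, the target is $\varphi(X) = v^TX^{-1}v$; I would write this as the optimal value of a parametric convex program, namely $v^TX^{-1}v = \inf_{y \in \mathbb{R}^p}\{ y^TXy - 2v^Ty\}$ whenever $X \succ 0$. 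Since the expression inside the infimum is jointly convex in $(X,y)$ (linear in $X$ plus a convex quadratic in $y$), the partial infimum over $y$ is convex in $X$, which gives psd-convexity on $\mathcal{S}^p_{++}$. (As a cross-check one could instead compute $D^2f(X)(H,H) = 2X^{-1}HX^{-1}HX^{-1}$ and verify that $v^T D^2f(X)(H,H)v \geq 0$, but the variational argument is cleaner.)

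For part (b) the work is purely algebraic verification followed by an application of (a). I would expand each of the three candidate expressions and collect terms:
\begin{align*}
(X+Y)^T(X+Y) - (X^TX + Y^TY) &= X^TY + Y^TX, \\
X^TX + Y^TY - (X-Y)^T(X-Y) &= X^TY + Y^TX, \\
\tfrac{1}{2}\bigl[(X+Y)^T(X+Y) - (X-Y)^T(X-Y)\bigr] &= X^TY + Y^TX,
\end{align*}
so the three identities in~\eqref{eq:DC_form} hold. To conclude that each is a legitimate psd-convex-concave decomposition in the sense of Definition~\ref{de:DC_decomp}, I observe that $X \mapsto X^TX$ and $Y \mapsto Y^TY$ are psd-convex by part (a), and that $(X,Y) \mapsto (X \pm Y)^T(X \pm Y)$ is likewise psd-convex because $X \pm Y$ is a linear function of the combined variable $(X,Y)$ and the composition of a psd-convex mapping with a linear mapping remains psd-convex (again immediate from Lemma~\ref{le:psd_convex_properties}(a), since $v^T(X \pm Y)^T(X \pm Y)v = \|(X \pm Y)v\|_2^2$ is a convex quadratic in $(X,Y)$). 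Hence each line of~\eqref{eq:DC_form} writes $X^TY + Y^TX$ as the difference of two psd-convex mappings on $\mathbb{R}^{m\times n} \times \mathbb{R}^{m\times n}$.

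\textbf{Main obstacle.} The only nontrivial step is establishing psd-convexity of $X \mapsto X^{-1}$ on $\mathcal{S}^p_{++}$; everything else reduces to recognizing a convex quadratic in disguise. I would lean on the variational identity above rather than a second-derivative computation so as to avoid having to verify positive semidefiniteness of the Hessian by hand.
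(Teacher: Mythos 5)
Your treatment of $f(X)=X^TX$, $g(X)=XX^T$ and of part (b) is correct and is exactly in the spirit of the paper's toolbox: the paper states this lemma without proof, and reducing everything to Lemma~\ref{le:psd_convex_properties}(a) plus the observation that $v^TX^TXv=\|Xv\|_2^2$ and $v^T(X\pm Y)^T(X\pm Y)v=\|(X\pm Y)v\|_2^2$ are convex quadratics in the (joint) variable, together with the elementary expansions in \eqref{eq:DC_form}, is a complete argument for those parts.

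However, the step you single out as the only nontrivial one — psd-convexity of $X\mapsto X^{-1}$ on $\mathcal{S}^p_{++}$ — is broken as written. First, the variational identity has the wrong sign: minimizing $y^TXy-2v^Ty$ over $y$ gives $y^\ast=X^{-1}v$ and optimal value $-v^TX^{-1}v$, so $\inf_y\{y^TXy-2v^Ty\}=-v^TX^{-1}v$, not $v^TX^{-1}v$. Second, the justification "linear in $X$ plus a convex quadratic in $y$, hence jointly convex" is false: separate convexity does not give joint convexity, and $y^TXy$ is in fact not jointly convex in $(X,y)$ (already the scalar case $(x,y)\mapsto xy^2$ on $x>0$ has indefinite Hessian). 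The standard fixes are easy: either write $v^TX^{-1}v=\sup_{y}\{2v^Ty-y^TXy\}$ and note it is a pointwise supremum of functions affine in $X$, hence convex (equivalently, your correctly-signed infimum exhibits $-v^TX^{-1}v$ as an infimum of affine functions of $X$, hence concave, which is the same conclusion); or use the Schur-complement epigraph characterization $t\geq v^TX^{-1}v \Leftrightarrow \begin{bmatrix}X & v\\ v^T & t\end{bmatrix}\succeq 0$ for $X\succ 0$; or carry out the second-derivative cross-check you mention parenthetically, which does work since $v^TD^2f(X)(H,H)v=2\,(HX^{-1}v)^TX^{-1}(HX^{-1}v)\geq 0$. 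As it stands, the argument you say you would lean on proves nothing, so this step needs to be replaced by one of these corrected versions.
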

The statement b) provides at least three different explicit psd-convex-concave decompositions of the bilinear form $X^TY+Y^TX$. 
Intuitively, we can see that the first decomposition has a ``strong curvature'' on the second term, while the second and the third decompositions have ``less
curvature'' on the second term due in case
of a compensation between $X$ and $Y$.

The following result will be used  to transform  Shur psd-convex constraints to LMI constraints.
\begin{lemma}\label{le:key_lemma}
\begin{itemize}
\item[a)] Suppose that $A\in\mathcal{S}^n$. Then the matrix inequality $BB^T - A \prec (\preceq)~ 0$ is equivalent to
\begin{equation}\label{eq:Schur_comp}
\begin{bmatrix}A & B\\ B^T & I\end{bmatrix} \succ (\succeq)~ 0.
\end{equation}
\item[b)] Suppose that $A\in\mathcal{S}^n$, $D \succ 0$, then we have:
\begin{equation}
\begin{bmatrix}
A - BB^T & C\\ C^T & D
\end{bmatrix} \succ (\succeq) ~ 0 ~~
\Longleftrightarrow~~
\begin{bmatrix}
A & B & C\\ B^T & I & O\\ C^T & O & D
\end{bmatrix} \succ (\succeq)~0.
\end{equation}
\end{itemize}
\end{lemma}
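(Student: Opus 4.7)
The plan is to derive both parts from the classical Schur complement identity, which in turn I would establish via an explicit block-congruence factorization.

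For part (a), I would start from the algebraic factorization
$$\begin{bmatrix} A & B \\ B^T & I \end{bmatrix} = \begin{bmatrix} I & B \\ 0 & I \end{bmatrix} \begin{bmatrix} A - BB^T & 0 \\ 0 & I \end{bmatrix} \begin{bmatrix} I & 0 \\ B^T & I \end{bmatrix},$$
which can be verified directly. The outer factors are mutual transposes and invertible (block-triangular with identity diagonal blocks), so this is a congruence transformation. By Sylvester's law of inertia, the left-hand block matrix is positive (semi)definite iff the middle block-diagonal matrix is, iff $A - BB^T \succ (\succeq)~0$, i.e., $BB^T - A \prec (\preceq)~0$. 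Both the strict and non-strict cases are handled in one stroke.

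For part (b), I would view the $3\times 3$ block matrix on the right as a $2\times 2$ block matrix with bottom-right block equal to $\mathrm{diag}(I, D)$, which is positive definite because $D \succ 0$ by hypothesis. Applying the same Schur-type factorization with $\mathrm{diag}(I, D^{-1})$ used to clear $[B,~C]$ from the first block-row, I obtain that
$$M := \begin{bmatrix} A & B & C \\ B^T & I & O \\ C^T & O & D \end{bmatrix}$$
is congruent to $\mathrm{diag}(S, I, D)$ with $S := A - BB^T - C D^{-1} C^T$. Since $I \succ 0$ and $D \succ 0$, this gives $M \succ (\succeq)~0$ iff $S \succ (\succeq)~0$. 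Separately, a single application of the Schur complement with respect to $D$ to the left-hand $2\times 2$ block matrix in (b) shows that it is positive (semi)definite iff the same $S$ is. Both sides therefore reduce to positivity of $S$, proving the equivalence.

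The main obstacle I anticipate is purely bookkeeping: choosing the congruence factor so it simultaneously clears both the $B$ block and the $C$ block from the top block-row of $M$, and keeping track of the sizes of the identity blocks. The assumption $D \succ 0$ is essential in (b) because $D$ must be inverted in the Schur complement; no such assumption is needed in (a), where the pivot block is the identity and is automatically invertible.
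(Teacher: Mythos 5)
Your proposal is correct, and it follows exactly the route the paper indicates: the paper omits the proof, remarking only that the lemma ``immediately follows by applying Schur's complement,'' and your block-congruence factorizations are precisely the standard argument behind that remark, written out in full (including the correct observation that the pivot blocks $I$ and $\mathrm{diag}(I,D)$ are positive definite, so both the strict and non-strict cases go through). No gaps.
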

The proof of this lemma immediately follows by applying Schur's complement and Lemma \ref{le:psd_convex_properties} \cite{Boyd2004}. We omit the proof here.

\subsection{Optimization involving convex-concave matrix inequality constraints}
Let us consider the following optimization problem:
\begin{equation}\label{eq:MDC_program}
\left\{\begin{array}{cl}
\displaystyle\min_{x}&f(x)\\
\textrm{s.t.}& G_i(x) - H_i(x) \preceq 0, ~i=1,\dots, l,\\
&x\in\Omega,
\end{array}\right.
\end{equation}
where $f :\mathbb{R}^n\to\mathbb{R}$ is convex, $\Omega\subseteq\mathbb{R}^n$ is a nonempty, closed convex set, and $G_i$ and $H_i$ ($i=1,\dots,l$) are
psd-convex. Problem \eqref{eq:MDC_program} is
referred to as a convex optimization with psd-convex-concave matrix inequality constraints.

Let $\Omega$ be a polyhedral in $\mathbb{R}^n$. Then, if $f$ is nonlinear or one of the mappings $G_i$ or $H_i$ ($i=1,\dots,l)$ is nonlinear then
\eqref{eq:MDC_program} is a nonlinear
semidefinite program. If $H_i$ ($i=1,\dots, l$) are linear then
\eqref{eq:MDC_program} is a convex nonlinear SDP problem. Otherwise, it is a nonconvex nonlinear SDP problem.

Let us define $L(x,\Lambda) := f(x) + \sum_{i=1}^l\Lambda_i\circ [G_i(x)-H_i(x)]$ as the Lagrange function of \eqref{eq:MDC_program}, where
$\Lambda_i\in\mathcal{S}^p$ ($i=1,\dots, l$) considered as
Lagrange multipliers. The generalized KKT condition of \eqref{eq:MDC_program} is presented as:
\begin{equation}\label{eq:DC_KKT}
\left\{\begin{array}{cl}
&0 \in \nabla f(x) + \sum_{i=1}^lD[G_i(x)-H_i(x)]^{*}\Lambda_i + N_{\Omega}(x),\\
&G_i(x)-H_i(x)\preceq 0,~ \Lambda_i\succeq 0,\\
&[G_i(x)-H_i(x)]\circ\Lambda_i=0, ~i=1,\dots, l.
\end{array}\right.
\end{equation}
Here, $N_{\Omega}(x)$ is the normal cone of $\Omega$ at $x$ defined as
\begin{equation*}
N_{\Omega}(x) := \left\{\begin{array}{cl}
\left\{w\in\mathbb{R}^n ~|~ w^T(y-x)\geq 0, ~\forall y\in \Omega\right\}, &~\textrm{if}~ x\in\Omega,\\
\emptyset, &~\textrm{otherwise}.
\end{array}\right.
\end{equation*}
A pair $(x^{*}, \Lambda^{*})$ satisfying \eqref{eq:DC_KKT} is called a KKT point, $x^{*}$ is called a stationary point and $\Lambda^{*}$
is the corresponding multiplier of \eqref{eq:MDC_program}. The generalized optimality condition for nonlinear semidefinite programming can be found in the
literature (e.g.,
\cite{Shapiro1997,Sun2006}).

Let us denote by
\begin{equation}\label{eq:feasible_set}
\mathcal{D} := \left\{x\in\Omega ~|~ G_i(x)-H_i(x)\preceq 0, ~l=1,\dots,l\right\},
\end{equation}
the feasible set of \eqref{eq:MDC_program} and $\textrm{ri}(\mathcal{D})$ is the  relative interior of $\mathcal{D}$ which is defined by
\begin{equation*}
\textrm{ri}(\mathcal{D}) := \left\{
x\in\textrm{ri}(\Omega) ~|~ G_i(x) - H_i(x) \prec 0, ~i=1,\dots, m\right\},
\end{equation*}
where $\textrm{ri}(\Omega)$ is the set of classical relative interiors of $\Omega$ \cite{Boyd2004,Horst1996}.
The following condition is a fundamental assumption in this paper.

\begin{assumption}\label{as:A1}
$\textrm{ri}(\mathcal{D})$ is nonempty.
\end{assumption}
Note that this assumption is crucial for our method, because, as we shall see, it requires a strictly feasible starting point $x^0\in\textrm{ri}(\mathcal{D})$.
Finding such a point is in principle not
an easy task.
However, in many problems, this assumption is always satisfied.
In Section~\ref{sec:applications} we will propose techniques to determine a starting point for the control problems under consideration.

\section{The algorithm and its convergence}\label{sec:SSDP}
In this section, a local optimization method for finding a stationary point of problem \eqref{eq:MDC_program} is proposed.
Motivated from the DC programming algorithm developed in \cite{Pham1998} and the convex-concave procedure in \cite{Sriperumbudur2009} for scalar functions, we
develop an iterative procedure for
finding a stationary point of \eqref{eq:MDC_program}.
The main idea is to linearize the nonconvex part of the psd-convex-concave matrix inequality constraints and then transform the linearized subproblem into a
quadratic semidefinite programming
problem. The subproblem can be either directly solved by means of interior point methods or transformed into a quadratic problem with LMI constraints. In the
latter case,  the resulting
problem can be solved by available software tools such as Sedumi \cite{Sturm1999} and SDPT3 \cite{Tutunku2001}.

\subsection{The algorithm}
Suppose that $x^k\in\Omega$ is a given point, the linearized problem of \eqref{eq:MDC_program} around $x^k$ is written as
\begin{equation}\label{eq:psd_convex_subprob}
\left\{ \begin{array}{cl}
\displaystyle\min_{x}  & \left\{ f_k(x) := f(x) + \frac{\rho_k}{2}\|Q_k(x-x_k)\|^2_2\right\}\\
\textrm{s.t.}  &\!\!\! G_i(x) \!-\! H_i(x^k) \!-\! DH_i(x^k)(x \!-\! x^k) \! \preceq \!0, \!~i=1,\dots, l,\\
&\!\!\! x\in\Omega.
\end{array}\right.
\end{equation}
Here, we add a regularization term into the objective function of the original problem, where $Q_k$ is a given matrix that projects $x-x_k$ in a certain
subspace of $\mathbb{R}^n$ and $\rho_k \geq 0$
is a regularization parameter.
Since $G_i$ ($i=1,\dots, l$) are psd-convex and the objective function is convex, problem \eqref{eq:psd_convex_subprob} is convex.
The \textit{linearized convex-concave SDP algorithm} for solving \eqref{eq:MDC_program} is described as follows.

\begin{algorithm}\label{alg:A1}{~}\\
\noindent\textbf{Initialization:} Choose a positive number $\rho_0$ and a matrix $Q_0\in\mathcal{S}^n_{+}$. Find an initial point
$x^0\in\textrm{ri}(\mathcal{D})$. Set $k := 0$.\\
\noindent\textbf{Iteration $k$:} For $k=0, 1,\dots$. Perform the following steps:
\begin{itemize}
\item[]\textit{Step 1:} Solve the convex semidefinite program \eqref{eq:psd_convex_subprob} to obtain a solution $x^{k+1}$.
\item[]\textit{Step 2:} If $\norm{x^{k+1} - x^{k}}\leq \varepsilon$ for a given tolerance $\varepsilon>0$ then terminate. Otherwise, update $\rho_k$ and $Q_k$
(if necessary), set $k:=k+1$ and go back
to \textit{Step 1}.
\end{itemize}
\end{algorithm}
The following main property of the method makes an implementation very easy.
If the initial point $x^0$ belongs to the relative interior of the feasible set $\mathcal{D}$, i.e. $x^0\in\textrm{ri}(\mathcal{D})$, then Algorithm
\ref{alg:A1} generates a
sequence ${x^k}$ which still belongs to $\mathcal{D}$.
Consequently, no line-search procedure is needed to ensure the global convergence.

This property follows from the fact that the linearization of the concave part $-H_i$ is its an upper approximation of this mapping (in the sense of positive
semidefinite cone), i.e.
\begin{equation*}
-H_i(x) \preceq -H_i(x^k) - DH_i(x^k)(x-x^k), \forall x\in\Omega,
\end{equation*}
which is equivalent to
\begin{equation*}
G_i(x) - H_i(x) \preceq G_i(x) - H_i(x^k) - DH_i(x^k)(x-x^k), \forall x\in\Omega.
\end{equation*}
Hence, if the subproblem \eqref{eq:psd_convex_subprob} has a solution $x^{k+1}$ then it is feasible to \eqref{eq:MDC_program}.
Geometrically, Algorithm \ref{alg:A1} can be seen as an inner approximate method.

The main tasks of an implementation of Algorithm \ref{alg:A1} consist of:
\begin{itemize}
\item determining an initial point $x^0\in\textrm{ri}(\mathcal{D})$, and 
\item solving the convex semidefinite program \eqref{eq:psd_convex_subprob} repeatedly.
\end{itemize}
As mentioned before, since $\mathcal{D}$ is nonconvex, finding an initial point $x^0$ in $\textrm{ri}(\mathcal{D})$ is, in principle, not an easy task. However,
in some practical problems, this can
be done by exploiting the special structure of the problem (see the examples in Section~\ref{sec:applications}).

To solve the convex subproblem \eqref{eq:psd_convex_subprob}, we can either implement an interior point method and exploit the structure of the problem or
transform it into a standard SDP problem and
then make use of available software tools for SDP.
The regularization parameter $\rho_k$ and the projection matrix $Q_k$ can be fixed at appropriate choices for all iterations, or adaptively updated.
\begin{lemma}\label{le:KKT_point}
If $x^{k}$ is a solution of \eqref{eq:psd_convex_subprob} linearized at $x^k$ then it is a stationary point of \eqref{eq:MDC_program}.
\end{lemma}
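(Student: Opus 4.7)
The plan is to show that the KKT conditions of the linearized convex subproblem \eqref{eq:psd_convex_subprob}, evaluated at the hypothesized fixed point $x^k$, collapse exactly into the generalized KKT system \eqref{eq:DC_KKT} of the original problem. Since \eqref{eq:psd_convex_subprob} is a convex semidefinite program (convex objective, psd-convex constraint mappings, polyhedral/convex set $\Omega$), optimality of $x^k$ implies the existence of multipliers $\Lambda_i^k \succeq 0$ satisfying the stationarity and complementarity of that convex subproblem; the task is then to rewrite those conditions at the point $x = x^k$.

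First I would write the Lagrangian of \eqref{eq:psd_convex_subprob},
\begin{equation*}
L_k(x,\Lambda) = f(x) + \tfrac{\rho_k}{2}\|Q_k(x-x^k)\|_2^2 + \sum_{i=1}^l \Lambda_i\circ\bigl[G_i(x) - H_i(x^k) - DH_i(x^k)(x-x^k)\bigr],
\end{equation*}
and record its stationarity condition at the optimum $x^{k+1} = x^k$:
\begin{equation*}
0 \in \nabla f(x^k) + \rho_k Q_k^T Q_k (x^k - x^k) + \sum_{i=1}^l \bigl[DG_i(x^k) - DH_i(x^k)\bigr]^{*}\Lambda_i^k + N_{\Omega}(x^k).
\end{equation*}
The quadratic regularization term contributes zero gradient at $x=x^k$, so this is precisely the stationarity line of \eqref{eq:DC_KKT}. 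For feasibility and complementarity I would observe that the linearized constraint evaluated at $x=x^k$ reduces to $G_i(x^k) - H_i(x^k) \preceq 0$ (which is the original feasibility), and the complementarity $\Lambda_i^k \circ [G_i(x^k) - H_i(x^k) - DH_i(x^k)(x^k - x^k)] = 0$ becomes $\Lambda_i^k \circ [G_i(x^k) - H_i(x^k)] = 0$, matching the last two lines of \eqref{eq:DC_KKT}. Thus $(x^k,\Lambda^k)$ is a KKT point of \eqref{eq:MDC_program}.

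The one delicate point is justifying the existence of the subproblem multipliers $\Lambda_i^k \succeq 0$, i.e.\ a constraint qualification for \eqref{eq:psd_convex_subprob}. Here I would invoke Assumption A.\ref{as:A1}: since the algorithm maintains iterates in the feasible set (and, as argued around the upper-approximation property, the linearized feasible set contains a neighborhood of any strictly feasible point of the original problem), a Slater-type condition holds for the linearized SDP, so a standard Robinson/Slater constraint qualification for convex SDPs yields the required nonnegative multipliers. Once those multipliers are in hand, no further computation is needed — the proof is essentially bookkeeping, with the observation that linearizing $H_i$ at $x^k$ and then evaluating at $x^k$ reproduces both the value $H_i(x^k)$ and the derivative $DH_i(x^k)$, which is exactly what is needed for the KKT data of \eqref{eq:MDC_program} to match that of \eqref{eq:psd_convex_subprob}.
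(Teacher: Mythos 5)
Your core argument is exactly the paper's: write down the KKT system \eqref{eq:KT_cond} of the convex subproblem \eqref{eq:psd_convex_subprob}, substitute $x^{k+1}=x^k$, note that the gradient of the regularization term vanishes there and that the linearized constraint and the complementarity condition evaluated at the linearization point reproduce $G_i(x^k)-H_i(x^k)\preceq 0$ and $\Lambda_i^k\circ[G_i(x^k)-H_i(x^k)]=0$, so the whole system collapses to \eqref{eq:DC_KKT}. That bookkeeping is correct and is precisely how the paper proves the lemma.

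The one questionable step is your justification of the constraint qualification. You assert that the linearized feasible set contains a neighborhood of any strictly feasible point of \eqref{eq:MDC_program}; the inclusion actually goes the other way. Since $-H_i(x)\preceq -H_i(x^k)-DH_i(x^k)(x-x^k)$, one gets $G_i(x)-H_i(x)\preceq G_i(x)-H_i(x^k)-DH_i(x^k)(x-x^k)$, so the linearized constraint is \emph{tighter} than the original one: the subproblem's feasible set is an inner approximation of $\mathcal{D}$, and a Slater point of \eqref{eq:MDC_program} need not even be feasible for \eqref{eq:psd_convex_subprob}. The correct quick observation is that at the linearization point the two constraints coincide, so $x^k$ is a Slater point of the subproblem exactly when $G_i(x^k)-H_i(x^k)\prec 0$ for all $i$, which is guaranteed for $x^0\in\textrm{ri}(\mathcal{D})$ but not for later iterates. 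The paper sidesteps this entirely: Assumption A\ref{as:A3} (solvability plus the strong second order sufficient condition) already posits a KKT point of the subproblem, and its proof simply takes the associated multiplier $\Lambda^{k+1}$ as given. So either cite A\ref{as:A3} as the source of the multipliers, or restrict your Slater argument to the case where the fixed point $x^k$ is itself strictly feasible; with that repair your proof matches the paper's.
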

\begin{proof}
Suppose that $\Lambda^{k+1}$ is a multiplier associated with $x^k$, substituting $x^k$ into the generalized KKT condition \eqref{eq:KT_cond} of
\eqref{eq:psd_convex_subprob} we obtain
\eqref{eq:DC_KKT}.
Thus $x^k$ is a stationary point of \eqref{eq:MDC_program}.
\end{proof}

\subsection{Convergence analysis}
\vskip -0.3cm
In this subsection, we restrict our discussion to the following special case.
\begin{assumption}\label{as:A2}
The mappings $G_i$ ($i=1,\dots, l$) are Schur psd-convex and $\Omega$ is formed by a finite number of LMIs. In addition, $f$ is convex quadratic on
$\mathbb{R}^n$
with a convexity parameter $\rho_f\geq 0$.
\end{assumption}
This assumption is only technical for our implementation. If the mapping $G_i$ is Schur psd-convex then the linearized constraints of problem
\eqref{eq:psd_convex_subprob} can directly be transformed
into LMI constraints (see Lemma \ref{le:key_lemma}). In practice, $G_i$ ($i=1,\dots,l$) can be a general psd-convex mappings and $f$ can be a general convex
function.

Under Assumption A\ref{as:A2}, the convex subproblem \eqref{eq:psd_convex_subprob} can be transformed equivalently into a quadratic semidefinite program of the
form:
\begin{equation}\label{eq:LSDP}
\left\{\begin{array}{cl}
\displaystyle\min_{z\in\mathbb{R}^{n_z}} & \frac{1}{2}z^TBz + h^Tz\\
\textrm{s.t.} &A(z) + C \preceq 0,
\end{array}\right.
\end{equation}
where $A$ is a linear mapping from $\mathbb{R}^{n_z}$ to $\mathcal{S}^{p_z}$, $C\in\mathcal{S}^{p_z}$ and $B$ is a symmetric matrix, by means of Lemma
\ref{le:key_lemma}.

A vector $\hat{z}$ is said to satisfy the \textit{Slater condition} of \eqref{eq:LSDP} if $A(\hat{z}) + C \prec 0$. Suppose that the triple
$(\bar{z},\bar{V},\bar{S})$ satisfies the KKT condition
of \eqref{eq:LSDP} (see \cite{Freund2007}), where $\bar{z}$ is a primal stationary point, $\bar{V}$ is a Lagrange multiplier and $\bar{S}$ is a slack variable
associated with $\bar{z}$ and $\bar{V}$.
Then, problem \eqref{eq:LSDP} is said to satisfy the \textit{strict complementarity} condition at $(\bar{z},\bar{V},\bar{S})$ if $\bar{V} + \bar{S} \succ 0$.

Let $\bar{z}$ be a stationary point of \eqref{eq:LSDP}. We say that $0\neq p\in\mathbb{R}^{n_z}$ is a \textit{feasible direction} to \eqref{eq:LSDP} if $\bar{z}
+ \varepsilon p$ is a feasible point of
\eqref{eq:LSDP} for all $\varepsilon>0$ sufficiently small.
As in \cite{Freund2007}, we assume that the \textit{second order sufficient condition} holds for \eqref{eq:LSDP} at $\bar{z}$ with modulus $\mu > 0$ if for all
feasible directions $p$ at $\bar{z}$
with $p^T(h+B\bar{z}) = 0$, one has $p^TBp \geq \mu\norm{p}^2$.
We say that the convex problem \eqref{eq:LSDP} is solvable and satisfies the \textit{strong second order sufficient condition} if there exists a KKT point
$(\bar{z},\bar{V},\bar{S})$ of the KKT
system of \eqref{eq:LSDP} satisfies the second order sufficient condition and the strict complementary condition.

\begin{assumption}\label{as:A3}
The convex subproblem \eqref{eq:psd_convex_subprob} is \textit{solvable} and satisfies the \textit{strong second order sufficient condition}.
\end{assumption}
Assumption A\ref{as:A3} is standard in optimization and is usually used to investigate the convergence of the algorithms
\cite{Fares2002,Freund2007,Shapiro1997}.

The following lemma shows that $\Delta x^k := x^{k+1}-x^k$ is a descent direction of problem \eqref{eq:MDC_program} whose proof is given in the Appendix. %
\ref{app:A1}.
\begin{lemma}\label{le:descent_dir}
Suppose that $\{(x^k,\Lambda^k)\}_{k\geq 0}$ is a sequence generated by Algorithm \ref{alg:A1}. Then:
\begin{itemize}
\item[]$\mathrm{a)}$ The following inequality holds for $k\geq 0$:
\begin{eqnarray}\label{eq:descent_dir}
f(x^{k + 1}) - f(x^k) \leq -\frac{\rho_f}{2}\norm{x^{k + 1} - x^k}_2^2 - \rho_k\norm{Q_k(x^{k+1} - x^k)}^2_2,
\end{eqnarray}
where $\rho_f$ is the convexity parameter of $f$.
\item[]$\mathrm{b)}$ If there exists at least one constraint $i_0$, $i_0\in\{1,2,\dots,l\}$, to be strictly feasible at $x^k$, i.e. $G_{i_0}(x^k) - H_{i_0}(x^k)
\prec 0$, then $f(x^{k+1}) <
f(x^k)$ provided that $\Lambda^{k+1}_{i_0} \succ 0$.
\item[]$\mathrm{c)}$ If $\rho_k > 0$ and $Q_k$ is full-row-rank then $\Delta x^k$ is a sufficiently descent direction of \eqref{eq:MDC_program}.
\end{itemize}
\end{lemma}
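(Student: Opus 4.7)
The plan rests on two observations. First, $x^k$ is feasible for the linearized subproblem \eqref{eq:psd_convex_subprob}, since its linearized constraint evaluated at $x=x^k$ reduces to $G_i(x^k)-H_i(x^k)\preceq 0$. Hence $x^{k+1}$ minimizes the strongly convex $f_k$ over a convex set containing $x^k$. Second, the KKT system of the (convex) subproblem provides multipliers $\Lambda_i^{k+1}\succeq 0$ with $\Lambda_i^{k+1}\circ\tilde G_i(x^{k+1})=0$, where $\tilde G_i(x):=G_i(x)-H_i(x^k)-DH_i(x^k)(x-x^k)$, and such that $x^{k+1}$ minimizes the Lagrangian $L_k(\cdot,\Lambda^{k+1}):=f_k+\sum_i\Lambda_i^{k+1}\circ\tilde G_i$ over $\Omega$.

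For part (a) I would use the first-order optimality: since $x^k$ is feasible, $\nabla f_k(x^{k+1})^\top(x^k-x^{k+1})\ge 0$. Substituting $\nabla f_k=\nabla f+\rho_k Q_k^\top Q_k(\cdot-x^k)$ and moving the regularization contribution to the right yields $\nabla f(x^{k+1})^\top(x^k-x^{k+1})\ge\rho_k\|Q_k\Delta x^k\|_2^2$. Combining with the strong-convexity inequality for the $\rho_f$-convex quadratic $f$,
\[
f(x^k)-f(x^{k+1})\;\ge\;\nabla f(x^{k+1})^\top(x^k-x^{k+1})+\tfrac{\rho_f}{2}\|x^k-x^{k+1}\|_2^2,
\]
gives \eqref{eq:descent_dir} exactly.

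For part (b) I would use the Lagrangian form. Since $x^{k+1}$ minimizes $L_k(\cdot,\Lambda^{k+1})$ over $\Omega$ and $x^k\in\Omega$, and complementarity kills the constraint term at $x^{k+1}$,
\[
f_k(x^{k+1})=L_k(x^{k+1},\Lambda^{k+1})\le L_k(x^k,\Lambda^{k+1})=f(x^k)+\sum_{i=1}^l\Lambda_i^{k+1}\circ[G_i(x^k)-H_i(x^k)].
\]
Because the regularization term $\frac{\rho_k}{2}\|Q_k\Delta x^k\|_2^2$ is nonnegative, $f(x^{k+1})\le f_k(x^{k+1})$, so $f(x^{k+1})-f(x^k)\le\sum_i\Lambda_i^{k+1}\circ[G_i(x^k)-H_i(x^k)]$. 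Each summand is a trace product of a psd and a nsd matrix, hence nonpositive; for $i=i_0$ it is strictly negative, since writing $\Lambda_{i_0}^{k+1}=P^\top P$ with $P$ invertible and using $H_{i_0}(x^k)-G_{i_0}(x^k)\succ 0$ gives $\Lambda_{i_0}^{k+1}\circ[G_{i_0}(x^k)-H_{i_0}(x^k)]=-\textrm{trace}\bigl(P[H_{i_0}(x^k)-G_{i_0}(x^k)]P^\top\bigr)<0$. Therefore $f(x^{k+1})<f(x^k)$.

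Part (c) is a direct corollary of (a): a full-row-rank matrix in $\mathcal{S}^n_{+}$ is positive definite, so $\|Q_k\Delta x^k\|_2^2\ge\lambda_{\min}(Q_k)^2\|\Delta x^k\|_2^2$, and \eqref{eq:descent_dir} becomes $f(x^{k+1})-f(x^k)\le -\bigl(\rho_k\lambda_{\min}(Q_k)^2+\rho_f/2\bigr)\|\Delta x^k\|_2^2$, the sufficient-descent bound with a strictly positive constant as soon as $\rho_k>0$. The most delicate step I expect is the Lagrangian inequality in part (b): it relies on Assumption~A\ref{as:A3} to guarantee the KKT characterization of $x^{k+1}$, and on the strict-negativity upgrade of the $i_0$ summand, which in turn needs the trace-of-PD-times-PD fact rather than just the PSD/NSD monotonicity used for the other indices.
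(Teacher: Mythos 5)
Your proof is correct, and for part a) it is genuinely more elementary than the paper's. The paper derives all three items from one master inequality: it writes out the generalized KKT system \eqref{eq:KT_cond} of the subproblem at $x^{k+1}$, converts the stationarity condition into trace form via the adjoint identity, applies the psd-convexity gradient inequality of Lemma \ref{le:psd_convex_properties} to both $G_i$ and $H_i$, and combines this with complementary slackness to obtain
$f(x^k)-f(x^{k+1})+\sum_{i=1}^l\Lambda_i^{k+1}\circ[G_i(x^k)-H_i(x^k)]\geq\frac{\rho_f}{2}\norm{x^{k+1}-x^k}_2^2+\rho_k\norm{Q_k(x^{k+1}-x^k)}_2^2$,
from which a), b), c) all follow. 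You instead prove a) with no multipliers at all, using only the variational inequality $\nabla f_k(x^{k+1})^T(x^k-x^{k+1})\geq 0$ (valid because $x^k$ is feasible for the linearized subproblem, which in turn rests on the invariance $x^k\in\mathcal{D}$ noted after Algorithm \ref{alg:A1}) together with the $\rho_f$-convexity of $f$; this bypasses Lemma \ref{le:psd_convex_properties} and does not even need Assumption A\ref{as:A3} for that item. For b) you argue through the Lagrangian: KKT stationarity, the convexity of $x\mapsto\Lambda_i^{k+1}\circ[G_i(x)-H_i(x^k)-DH_i(x^k)(x-x^k)]$ for $\Lambda_i^{k+1}\succeq 0$ (a fact worth stating explicitly, since it is what licenses the claim that $x^{k+1}$ minimizes the Lagrangian over $\Omega$), and complementary slackness give $f(x^{k+1})-f(x^k)\leq\sum_{i=1}^l\Lambda_i^{k+1}\circ[G_i(x^k)-H_i(x^k)]$, and your factorization $\Lambda_{i_0}^{k+1}=P^TP$ with $P$ nonsingular correctly upgrades the $i_0$ term to strict negativity, reaching the paper's sign argument without its differential inequalities. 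The trade-off: the paper's single chain yields a version of b) that retains the quadratic decrease terms, whereas your split is shorter and isolates exactly which hypotheses each item needs. Your reading of c), taking $Q_k$ symmetric positive semidefinite so that full row rank means positive definite and $\norm{Q_k(x^{k+1}-x^k)}_2^2\geq\lambda_{\min}(Q_k)^2\norm{x^{k+1}-x^k}_2^2$, is a precise rendering of the paper's one-line ``follows directly from \eqref{eq:descent_dir}''.
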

The following theorem shows the convergence of Algorithm \ref{alg:A1} in a particular case.

\begin{theorem}\label{th:convergence}
Under Assumptions A\ref{as:A1}, A\ref{as:A2} and A\ref{as:A3}, suppose that $f$ is bounded from below on $\mathcal{D}$, where $\mathcal{D}$ is assumed to be
bounded in $\mathbb{R}^n$. Let
$\{(x^k,\Lambda^k)\}$ be a sequence generated
by Algorithm \ref{alg:A1} starting from $x^0\in\textrm{ri}(\mathcal{D})$.
Then if either $f$ is strongly convex or $\rho_k \equiv \rho > 0$ and $Q_k \equiv Q$ is full-row-rank for all $k\geq 0$ then every accumulation point
$(x^{*},\Lambda^{*})$ of $\{(x^k, \Lambda^k)\}$ is
a KKT point of
\eqref{eq:MDC_program}.
Moreover, if the set of the KKT points of \eqref{eq:MDC_program} is finite then the whole sequence $\{ (x^k, \Lambda^k) \}$ converges to a KKT point of
\eqref{eq:MDC_program}.
\end{theorem}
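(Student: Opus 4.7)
The plan is to combine the monotone descent property of Lemma \ref{le:descent_dir} with the boundedness of $\mathcal{D}$ to extract a convergent subsequence along which the step $\norm{x^{k+1}-x^k}_2$ vanishes, and then to pass to the limit in the KKT conditions of the convex subproblem \eqref{eq:psd_convex_subprob}. Since $x^0\in\mathrm{ri}(\mathcal{D})$ and the linearization $-H_i(x^k) - DH_i(x^k)(\cdot-x^k)$ is an upper approximation of $-H_i$ in the psd-cone sense, every iterate $x^k$ stays in $\mathcal{D}$. Lemma \ref{le:descent_dir}(a) makes $\{f(x^k)\}$ non-increasing; since $f$ is bounded from below on $\mathcal{D}$, telescoping yields
$$\sum_{k=0}^{\infty}\left[\tfrac{\rho_f}{2}\norm{x^{k+1}-x^k}_2^2 + \rho_k\norm{Q_k(x^{k+1}-x^k)}_2^2\right] \;\le\; f(x^0) - \lim_{k\to\infty} f(x^k) < \infty.$$
In the strongly convex case ($\rho_f>0$) the first term already forces $\norm{x^{k+1}-x^k}_2\to 0$; otherwise $Q_k\equiv Q$ is symmetric and full-row-rank, hence invertible, so $\norm{Q(\cdot)}_2^2\ge\lambda_{\min}(Q^TQ)\norm{\cdot}_2^2$ gives the same conclusion.

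Boundedness of $\mathcal{D}$ now yields an accumulation point: pick a subsequence $x^{k_j}\to x^{*}$, for which $x^{k_j+1}\to x^{*}$ as well. The next step is to write down the KKT system of \eqref{eq:psd_convex_subprob} at $(x^{k+1},\Lambda^{k+1})$,
$$0 \in \nabla f(x^{k+1}) + \rho_k Q_k^TQ_k(x^{k+1}-x^k) + \sum_{i=1}^{l}\bigl[DG_i(x^{k+1})^{*} - DH_i(x^k)^{*}\bigr]\Lambda_i^{k+1} + N_{\Omega}(x^{k+1}),$$
together with the linearized feasibility and complementary slackness. Under Assumption A\ref{as:A3}, the strong second order sufficient condition and strict complementarity for the LSDP \eqref{eq:LSDP} provide local Lipschitz stability of its primal-dual solutions (standard sensitivity results for SDP as in \cite{Freund2007}), so $\{\Lambda^{k_j+1}\}$ is bounded and, along a further subsequence, converges to some $\Lambda^{*}\succeq 0$. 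Passing to the limit using continuity of $f$, $G_i$, $H_i$ and their derivatives, closedness of $N_{\Omega}$ (since $\Omega$ is LMI-defined), and $\norm{x^{k+1}-x^k}_2\to 0$, recovers exactly the KKT system \eqref{eq:DC_KKT} at $(x^{*},\Lambda^{*})$; the limiting complementarity $[G_i(x^{*})-H_i(x^{*})]\circ\Lambda_i^{*}=0$ follows from the linearized one since $H_i(x^k)+DH_i(x^k)(x^{k+1}-x^k)\to H_i(x^{*})$.

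For the final assertion, the bounded sequence $\{x^k\}\subset\mathcal{D}$ with $\norm{x^{k+1}-x^k}_2\to 0$ has an accumulation set that is connected (a classical Ostrowski-type argument: successive differences vanishing combined with compactness prevent the limit set from splitting into disjoint pieces). By the first part, every point of this set is a stationary point; if the stationary set is finite it is in particular totally disconnected, so the connected accumulation set collapses to a single point, forcing $x^k\to x^{*}$. The multipliers then converge by the same Lipschitz-stability argument applied to the whole tail.

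The main obstacle I expect is justifying boundedness and the limiting behaviour of $\{\Lambda^{k+1}\}$: nothing in the descent argument controls the duals directly, and it is precisely Assumption A\ref{as:A3} (strong SOSC together with strict complementarity for the subproblem, transferred via sensitivity analysis of parametric SDPs) that has to carry the weight. A secondary but minor subtlety is handling the normal-cone inclusion in the limit, which only uses closedness of $N_{\Omega}$ and outer semicontinuity of the normal-cone map for closed convex $\Omega$.
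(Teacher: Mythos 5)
Your argument is sound and reaches all the stated conclusions, but it takes a genuinely different route from the paper. The paper's proof is an application of abstract convergence theory for point-to-set algorithmic maps: it introduces the solution mapping $A_{\mathrm{sol}}$ of the subproblem \eqref{eq:psd_convex_subprob}, shows that this map is closed by invoking Theorem 1 of \cite{Freund2007} (this is where Assumption A\ref{as:A3} enters, giving differentiability of the solution map near the solution) together with compactness of $\mathcal{D}$, observes from Lemma \ref{le:descent_dir} that $f$ is strictly monotone along the iterates, and then cites Theorem 2 of \cite{Meyer1976} to conclude at once that every limit point is stationary, that the limit set is connected, and that finiteness of the stationary set forces convergence of the whole sequence. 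You instead argue directly: telescoping the descent inequality \eqref{eq:descent_dir} gives $\norm{x^{k+1}-x^k}\to 0$ (via $\rho_f>0$ or $\rho\,\lambda_{\min}(Q^TQ)>0$, and since $Q_0\in\mathcal{S}^n_{+}$ is square, full row rank indeed means invertible as you use), compactness of $\mathcal{D}$ yields accumulation points, you pass to the limit in the subproblem KKT system \eqref{eq:KT_cond} to recover \eqref{eq:DC_KKT}, and you replace Meyer's connectedness conclusion with the classical Ostrowski argument. Your route buys self-containedness and an explicit treatment of the dual sequence $\{\Lambda^k\}$, which the paper's proof (phrased entirely in terms of the primal iterates) leaves implicit even though the theorem is stated for the pair $(x^k,\Lambda^k)$; it also relies on Freund's sensitivity theorem less essentially (you use A\ref{as:A3} only to bound the multipliers, and Slater's condition for the convex subproblem would serve there too), which is exactly the kind of direct proof anticipated in Remark \ref{re:quadratic} for non-quadratic $f$. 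The paper's route buys brevity: closedness of $A_{\mathrm{sol}}$ plus strict monotonicity plus compactness delivers the whole statement in one stroke. The only point to tighten in your write-up is the limit passage through the regularization term, which needs $\rho_kQ_k^TQ_k(x^{k+1}-x^k)\to 0$ and hence boundedness of $\rho_k$, $Q_k$ in the strongly convex case (automatic when $\rho_k\equiv\rho$, $Q_k\equiv Q$, and implicitly assumed by the paper as well); this is not a genuine gap relative to the paper's own level of rigor.
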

\begin{proof}
Let $M(x^0) := \left\{x^k\right\}$ be a sequence of the sample points generated by Algorithm \ref{alg:A1} starting from $x^0$.
For a given $x\in\Omega$, let us define the following mapping:
\vskip -0.3cm
\begin{eqnarray}
A_{\textrm{sol}}(x) := \text{arg}\!\min \Big\{\!\!\!\!\! && f(y) + \frac{\rho}{2}\norm{Q(y - x)}_2^2 ~~|~~ y\in\Omega, \\
&& G_i(y) -  H_i(x) - DH_i(x)(y - x) \preceq 0, ~i=1,\dots,m
\Big\}.\notag
\end{eqnarray}
Then, $A_{\textrm{sol}}$ is a multivalued mapping and it can be considered as the solution mapping of the convex subproblem \eqref{eq:psd_convex_subprob}.
Note that  the sequence $\{x^k\}$ generated by Algorithm \ref{alg:A1} satisfies $x^{k+1} \in A_{\textrm{sol}}(x^k)$.\\
We first prove that $A_{\textrm{sol}}$ is a closed mapping.
Indeed, since the convex subproblem \eqref{eq:psd_convex_subprob} satisfies Slater's condition and has a solution that satisfies the
strict complementarity and the second order sufficient condition, applying Theorem 1 in \cite{Freund2007} we conclude that the mapping $A_{\textrm{sol}}$ is
differentiable in a neighborhood of the
solution. Consequently, it is closed due to the compactness of $\mathcal{D}$.

On the other hand, since $f$ is either strongly convex or $\rho_k \equiv \rho >0$ for all $k\geq 0$ and $Q_k\equiv Q$ is full-row-rank, it follows from Lemma
\ref{le:descent_dir} that the objective
function $f$ is strictly monotone on $M(x^0)$. Since $M(x^0)\subseteq\mathcal{D}$ and $\mathcal{D}$ is compact, $M(x^0)$ is also compact. Applying Theorem 2 in
\cite{Meyer1976} we conclude that every
limit points of the
sequence $\{x^k\}$ belongs to the set of stationary points $S^{*}$.
Moreover, $S^{*}$ is connected and if $S^{*}$ is finite then the whole sequence $\{x^k\}$ converges to $x^{*}$ in $S^{*}$.
\end{proof}

\begin{remark}\label{re:quadratic}
The condition that $f$ is quadratic in Assumption \ref{as:A2} can be relaxed to $f$ being twice continuously differentiable. However, in this case, we need a
direct proof for Theorem
\ref{th:convergence} instead of applying Theorem 1 in \cite{Freund2007}.
\end{remark}

\section{Applications to robust controller design}\label{sec:applications}
In this section, we apply the method developed in the previous section to the following static state/output feedback controller design problems:
\begin{enumerate}
\item Sparse linear static output feedback controller design;
\item Spectral abscissa and pseudo-spectral abscissa optimization;
\item $\mathcal{H}_2$ optimization;
\item $\mathcal{H}_{\infty}$ optimization;
\item and mixed $\mathcal{H}_2/\mathcal{H}_{\infty}$ synthesis.
\end{enumerate}
 We used the system data from \cite{Hassibi1999,Ostertag2008} and the COMP$\textrm{l}_\textrm{e}$ib library \cite{Leibfritz2003}. All the implementations are
done in Matlab 7.11.0 (R2010b)
running on a PC Desktop Intel(R) Core(TM)2 Quad CPU Q6600 with 2.4GHz and 3Gb RAM.
We use the YALMIP package \cite{Lofberg2004} with the SeDuMi 1.1 solver \cite{Sturm1999} to solve the LMI optimization problems arising in Algorithm
\ref{alg:A1} at the initial phase (Phase 1) and subproblem \eqref{eq:psd_convex_subprob}.
The Matlab codes can be downloaded at \url{http://www.kuleuven.be/optec/software/BMIsolver}.
We also benchmarked our method with various examples and compared our results with HIFOO \cite{Gumussoy09} and PENBMI \cite{Henrion2005} for all control
problems. HIFOO is an open-source Matlab
package for fixed-order controller design.
It computes a fixed-order controller using a hybrid algorithm for nonsmooth, nonconvex optimization based on quasi-Newton updating and gradient sampling.
PENBMI \cite{Henrion2005} 
is a commercial software for solving optimization problems with quadratic objective and BMI constraints. PENBMI is free licensed for academic purposes. 
We initialized the initial controller for HIFOO and the BMI parameters for PENBMI to the initial values of our method. As we shall see, we can reformulate the
spectral abscissa
optimization problem as a rank constrained LMI problem. Therefore, we also compared our results with LMIRank \cite{Orsi2006},
a MATLAB toolbox for solving rank constrained LMI problems, for the spectral abscissa optimization.

Note that all problems addressed here lead to at least one BMI constraint. To apply the method developed in the previous section, we propose a unified scheme to
treat these problems.

\begin{scheme}\label{scheme:A1}{~}
\begin{itemize}
\item[] \textit{Step 1. } Find a convex-concave decomposition of the BMI constraints as $G(x)-H(x) \preceq 0$.
\item[] \textit{Step 2. } Find a starting point $x^0\in\textrm{ri}(\mathcal{D})$.
\item[] \textit{Step 3. } For  a given $x^k$, linearize the concave part to obtain the convex constraint $G(x) - H_k(x) \preceq 0$, where $H_k$ is the
linearization of $H$ at $x^k$.
\item[] \textit{Step 4. } Reformulate the convex constraint as LMI constraint by means of Lemma \ref{le:key_lemma}.
\item[] \textit{Step 5. } Apply Algorithm \ref{alg:A1} with SDP solver to solve the problem.
\end{itemize}
\end{scheme}

\subsection{Sparse linear constant output-feedback design}\label{subsec:sparse_SOF}
Let us consider a BMI optimization problem of sparse linear constant output-feedback design given as:
\begin{equation}\label{eq:BMI_p1}
\begin{array}{cl}
\displaystyle\min_{\alpha, P, F} & -\sigma\alpha+\sum_{i=1}^{n_u}\sum_{j=1}^{n_y}|F_{ij}|\\
\text{s.t} ~& (A+BFC)^TP + P(A+BFC) + 2\alpha P \prec 0, ~~P = P^T,~ P\succ 0.
\end{array}
\end{equation}
Here, matrices $A$, $B$, $C$ are given with appropriate dimensions, $P$ and $F$ are referred as variables and $\sigma > 0$ is a weighting parameter. The
objective function consists of two terms: the
first term $\sigma\alpha$ is to stabilize the system (or to maximize the decay rate) and the second one is to ensure the sparsity of the gain matrix $F$.
This problem is a modification of the first example in \cite{Hassibi1999}.
Let us illustrate \textsc{Scheme A.\ref{scheme:A1}} for solving this problem.

\textit{Step 1:} Let $B_F := A + BFC + \alpha I$, where $I$ is the identity matrix. Then, applying Lemma \ref{le:key_lemma_a} we can write
\begin{align}
&(A \!+\! BFC)^T\!\!P \!+\! P(A\!+\!BFC) + 2\alpha P = B_F^TP + PB_F\nonumber\\
& = B_F^TB_F + P^TP - (B_F-P)^T(B_F-P),\label{eq:BMI_p1_dc1}\\
& = \frac{1}{2}\left[(B_F+P)^T(B_F+P) - (B_F-P)^T(B_F-P)\right].\label{eq:BMI_p1_dc2}
\end{align}
In our implementation, we use the decomposition \eqref{eq:BMI_p1_dc2}.

If we denote by
\begin{eqnarray}\label{eq:G_H_mapping}
G(\alpha,P,F) :=\frac{1}{2}(B_F+P)^T(B_F+P), ~~ \textrm{and} ~~ H(\alpha,P,F) :=\frac{1}{2}(B_F-P)^T(B_F-P),
\end{eqnarray}
then the BMI constraint in \eqref{eq:BMI_p1} can be written equivalently as a psd-convex-concave matrix inequality constraint (of a variable $x$ formed from
$(\alpha, P, F)$ as $x := (\alpha,
\text{vec}(P)^T, \textrm{vec}(F)^T)^T$) as follows:
\begin{eqnarray}\label{eq:DC_const_p1}
G(\alpha,P,F) - H(\alpha,P,F) \prec 0.
\end{eqnarray}
Note that the objective function of \eqref{eq:BMI_p1} is convex but nonsmooth which is not directly suitable for the SSDP approach in \cite{Correa2004}, but,
the nonconvex problem \eqref{eq:BMI_p1}
can be reformulated in the form of \eqref{eq:MDC_program} using slack variables.

\textit{Steps 2-5: } The implementation is carried out as follows:
\begin{itemize}
\item[]\textbf{Phase 1. }(\textit{Determine a starting point $x^0\in\textrm{ri}(\mathcal{D})$}). Set $F^0 := 0$, $\alpha^0 := -\alpha_0(A^T+A)/2$ where
$\alpha_0(\cdot)$ is the maximum real part of
the eigenvalues of the matrix, and compute $P=P^0$ as the solution of the LMI feasibility problem
\begin{equation}\label{eq:LMI_phase1}
(A+BF^0 C)^TP + P(A+BF^0 C) + 2\alpha^0 P \prec 0.
\end{equation}
The above choice for $(\alpha^0,F^0)$  originates from the property that  $P^0 = I$ renders the left hand size of (\ref{eq:LMI_phase1}) negative semi-definite
(but not negative definite).
\item[]\textbf{Phase 2. } Perform Algorithm \ref{alg:A1} with a starting point $x^0$ found at Phase 1.
\end{itemize}
Let us now illustrate \textit{Step 4} of \textsc{Scheme A.\ref{scheme:A1}}. After linearizing the concave part of the convex-concave reformulation of the last
BMI constraint in \eqref{eq:BMI_p1} at
$(F^k, P^k, \alpha^k)$ we obtain the linearization:
\begin{equation}\label{eq:Shur_form2}
(A \!+\! BFC \!+\! \alpha I \!+\! P)^T(A \!+\! BFC \!+\! \alpha I \!+\! P) - H_k(F,P,\alpha) \prec 0,
\end{equation}
where $H_k(F,P,\alpha)$ is a linear mapping of $F$, $P$ and $\alpha$.
Now, applying Lemma \ref{le:key_lemma}, \eqref{eq:Shur_form2} can be transformed to an LMI constraint:
\begin{equation*}
\begin{bmatrix}
H_k(F, P, \alpha) & (A \!+\! BFC \!+\! \alpha I \!+\! P)^T\\
(A \!+\! BFC \!+\! \alpha I \!+\! P) & I
\end{bmatrix}
\succ 0.
\end{equation*}
With the above approach we solved problem \eqref{eq:BMI_p1} for the same system data as in \cite{Hassibi1999}. 
Here, matrices $A$, $B$ and $C$ are given as:
\begin{align*}
A \!\!=\!\! {\scriptsize\left[\!\begin{array}{rrrrr}
   \!\!\!\! 2.45  \!\!\!\!&\!\! -0.90 \!\!\!&\!\!  1.53 \!\!\!&\!\! -1.26 \!\!\!&\!\!  1.76\!\!\\
   \!\!\!\! -0.12 \!\!\!\!&\!\! -0.44 \!\!\!&\!\! -0.01 \!\!\!&\!\!  0.69 \!\!\!&\!\!  0.90\!\!\\
   \!\!\!\! 2.07  \!\!\!\!&\!\! -1.20 \!\!\!&\!\! -1.14 \!\!\!&\!\!  2.04 \!\!\!&\!\! -0.76\!\!\\
   \!\!\!\! -0.59 \!\!\!\!&\!\!  0.07 \!\!\!&\!\!  2.91 \!\!\!&\!\! -4.63 \!\!\!&\!\! -1.15\!\!\\
   \!\!\!\! -0.74 \!\!\!\!&\!\! -0.23 \!\!\!&\!\! -1.19 \!\!\!&\!\! -0.06 \!\!\!&\!\! -2.52\!\!
\end{array}\!\right]}\!, \
B \!\!=\!\! {\scriptsize\left[\!\begin{array}{rrrrr}
   \!\!\! 0.81 \!\!&\!\! -0.79 \!\!&\!\!  0.00 \!\!&\!\!  0.00 \!\!&\!\!\! -0.95\!\!\\
   \!\!\!-0.34 \!\!&\!\! -0.50 \!\!&\!\!  0.06 \!\!&\!\!  0.22 \!\!&\!\!\!  0.92\!\!\\
   \!\!\!-1.32 \!\!&\!\!  1.55 \!\!&\!\! -1.22 \!\!&\!\! -0.77 \!\!&\!\!\! -1.14\!\!\\
   \!\!\!-2.11 \!\!&\!\!  0.32 \!\!&\!\!  0.00 \!\!&\!\! -0.83 \!\!&\!\!\!  0.59\!\!\\
   \!\!\! 0.31 \!\!&\!\! -0.19 \!\!&\!\! -1.09 \!\!&\!\!  0.00 \!\!&\!\!\!  0.00\!\!
\end{array}\!\right]}, \
C \!\!=\!\! {\scriptsize\begin{bmatrix}
    0.00 \!\!&\!\!  0.00 \!\!&\!\!  0.16 \!\!&\!  0.00 \!\!\!&\!\! -1.78\!\\
    1.23 \!\!&\!\! -0.38 \!\!&\!\!  0.75 \!\!&\! -0.38 \!\!\!&\!\! -0.00\!\\
    0.46 \!\!&\!\!  0.00 \!\!&\!\! -0.05 \!\!&\!  0.00 \!\!\!&\!\!  0.00\!\\
    0.00 \!\!&\!\! -0.12 \!\!&\!\! 0.23 \!\!&\! -0.12 \!\!\!&\!\!  1.14\!
\end{bmatrix}}\!.
\end{align*}
The weighting parameter $\sigma$ is chosen by $\sigma = 3$. Algorithm \ref{alg:A1} is terminated if one of the following conditions is satisfied:
\begin{itemize}
\item subproblem \eqref{eq:psd_convex_subprob} encounters a numerical problem;
\item $\norm{\Delta x^k}_{\infty}/(\norm{x^k}_{\infty}+1)\leq 10^{-3}$;
\item the maximum number of iterations, $K_{\max}$, reaches;
\item or the objective function is not significantly improved after two successive iterations (i.e. $|f^{k+1}-f^k| \leq 10^{-4}(1+|f^k|)$, for some $k=\bar{k}$
and $k=\bar{k}+1$, where $f^k :=
f(x^k)$).
\end{itemize}
In this example, Algorithm \ref{alg:A1} is terminated after $15$ iterations, whereas the objective function is not significantly improved.
However, after the $2^{\textrm{nd}}$ iteration, matrix $F$ only has $3$ nonzero elements, while the decay rate $\alpha$ is $1.17316$.
This value is much higher than the one reported in \cite{Hassibi1999}, $\alpha = 0.3543$ after $6$ iterations. We obtain the gain matrix $F$ as
\begin{equation*}
F = {\footnotesize\begin{bmatrix}
 0.6540 &  0.0000 & 0.0000 & 0.0000 \\
 0.0000 & -0.4872 & 0.0000 & 0.0000 \\
 0.0000 &  0.0000 & 0.0000 & 0.0000 \\
 0.0000 &  0.0000 & 0.0000 & 0.0000 \\
 0.0000 &  1.1280 & 0.0000 & 0.0000
\end{bmatrix}.}
\end{equation*}
With this matrix, the maximum real part of the eigenvalues of the closed-loop matrix in (\ref{eq:ss_LTI}), $A_F := A + BFC$, is $\alpha_0(A_F) := -1.40706$.
Simultaneously,
$\alpha_0(A_F^TP+PA_F+2\alpha P) = -0.327258 < 0$ and $\alpha_0(P) = 0.587574 > 0$. Note
that $\alpha_0(A_F) \neq -\alpha$ due to the in-activeness of the BMI constraint in \eqref{eq:BMI_p1} at the 2nd iteration step.

\subsection{Spectral abscissa and pseudo-spectral abscissa optimization}\label{subsec:SOF_design}
One popular problem in control theory is to optimize the spectral abscissa of the closed-loop system $\dot{x} = (A+BFC)x$.
Briefly, this problem is presented as an unconstrained optimization problem of the form:
\begin{equation}\label{eq:pure_spectral_abscissa_opt}
\min_{F\in\mathbb{R}^{n_u\times n_y}} \alpha_0(A+BFC),
\end{equation}
where $\alpha_0(A+BFC) := \sup\left\{ \textrm{Re}(\lambda) ~|~ \lambda\in\Lambda(A+BFC)\right\}$ is the spectral abscissa of $A+BFC$, $\textrm{Re}(\lambda)$
denotes the real part of
$\lambda\in\mathbb{C}$ and  $\Lambda(A+BFC)$ is the spectrum of $A+BFC$. Problem \eqref{eq:pure_spectral_abscissa_opt} has many drawbacks in terms of numerical
solution due to the nonsmoothness and
non-Lipschitz continuity of the objective function $\alpha_0$ \cite{Burke2002}.

In order to apply the method developed in this paper, we reformulate problem \eqref{eq:pure_spectral_abscissa_opt} as an optimization problem with BMI
constraints \cite{Burke2002,Leibfritz2004}:
\begin{equation}\label{eq:BMI_p2}
\left\{\begin{array}{cl}
\displaystyle
\max_{P,F,\beta} & \beta\\
\textrm{s.t.}~ &(A+BFC)^TP + P(A+BFC) + 2\beta P \prec 0, ~~P = P^T, ~P \succ 0.
\end{array}\right.
\end{equation}
Here, matrices $A\in\mathbb{R}^{n\times n}$, $B\in\mathbb{R}^{n\times n_u}$, $C\in\mathbb{R}^{n_y\times n}$ are given. Matrices $P\in\mathbb{R}^{n\times n}$ and
$F\in\mathbb{R}^{n_u\times n_y}$ and
the scalar $\beta$ are considered as variables.
If the optimal value of \eqref{eq:BMI_p2} is strictly positive then the closed-loop feedback controller $u = Fy$ stabilizes the linear system $\dot{x} =
(A+BFC)x$.

Problem \eqref{eq:BMI_p2} is very similar to \eqref{eq:BMI_p1}. Therefore, using the same trick as in \eqref{eq:BMI_p1}, we can reformulate \eqref{eq:BMI_p2} in
the form of \eqref{eq:MDC_program}.
More precisely, if we define $B_F := A + BFC + \beta I$ then the bilinear matrix mapping $A_F^TP + PA_F$ can be represented as a psd-convex-concave
decomposition of the form \eqref{eq:BMI_p1_dc2} and
problem \eqref{eq:BMI_p2} can be rewritten in the form of \eqref{eq:MDC_program}.
We implement Algorithm \ref{alg:A1} for solving this resulting problem using the same parameters and the stopping criterions as in Subsection
\ref{subsec:sparse_SOF}. In addition, we regularize the
objective function by adding the term $\frac{\rho_F}{2}\norm{F-F^k}_F^2 + \frac{\rho_P}{2}\norm{P-P^k}_F^2$, with $\rho_F
= \rho_P = 10^{-2}$. The maximum number of iterations $K_{\max}$ is set to $150$.

We test for several problems in COMP$\textrm{l}_\textrm{e}$ib and compare our results with the ones reported by HIFOO, PENBMI and LMIRank.
For LMIRank, we implement the algorithm proposed in \cite{Orsi2006}. We initialize the value of the decay rate $\alpha^0$ at $10^{-4}$ and perform an iterative
loop to increase $\alpha$ as
$\alpha^{k+1} := \alpha^k + 0.1$. The algorithm is terminated if either the problems (12) or (21) in \cite{Orsi2006} with a correspondence $\alpha$ can not be
solved or the maximum number of
iterations $K_{\max} = 100$ is reached.
\begin{table}[!ht]
\begin{center}
\renewcommand{\arraystretch}{0.7}
\begin{scriptsize}
\caption{Computational results for \eqref{eq:BMI_p2} in COMP$\textrm{l}_{\textrm{e}}$ib}\label{tb:exam2}
\newcommand{\cell}[1]{{\!}#1{\!}}
\newcommand{\cellbf}[1]{{\!}\textbf{#1}{\!}}
\begin{tabular}{|l|r|r|r|r|r|r|r|r|r|}\hline
\multicolumn{2}{|c|}{Problem} & \multicolumn{3}{c|}{Other Results, $\alpha_0(A_F)$} & \multicolumn{3}{c|}{Results and Performances} \\ \hline
\!\!\texttt{Name}\!\! &\!\!$\alpha_0(A)$\!\!&\!\!HIFOO\!\!&\!\!LMIRANK\!\!&\!\!PENBMI\!\!& \!\!$\alpha_0(A_F)$\!\! & \!\!\texttt{Iter}\!\! &
\!\!\!\texttt{time[s]}\!\! \\
\hline
\cell{AC1}  & \cell{0.000} & \cellbf{-0.2061}  & \cellbf{-8.4766} & \cellbf{-7.0758}    & \cellbf{-0.8535}  & \cell{ 41} & \cell{12.44} \\ \hline 
81
\cell{AC4}  & \cell{2.579} & \cellbf{-0.0500}  & \cellbf{-0.0500} & \cellbf{-0.0500}    & \cellbf{-0.0500}  & \cell{ 14} & \cell{4.60}  \\ \hline 
\cell{\textbf{AC5}$^{\textbf{a}}$}
            & \cell{0.999} & \cellbf{-0.7746}  & \cellbf{-1.8001} & \cellbf{-2.0438}    & \cellbf{-0.7389}  & \cell{ 28} & \cell{63.33} \\ \hline 
\cell{AC7}  & \cell{0.172} & \cellbf{-0.0322}  & \cellbf{-0.0204} & \cellbf{0.0896}     & \cellbf{-0.0673}  & \cell{150}     & \cell{111.46}\\ \hline 
2
\cell{AC8}  & \cell{0.012} & \cellbf{-0.1968}  & \cellbf{-0.4447} & \cellbf{0.4447}     & \cellbf{-0.0755}  & \cell{ 24}     & \cell{21.95} \\ \hline 
6
\cell{AC9}  & \cell{0.012} & \cellbf{-0.3389}  & \cellbf{-0.5230} & \cellbf{-0.4450}    & \cellbf{-0.3256}  & \cell{ 78}     & \cell{74.57} \\ \hline 
7
\cell{AC11} & \cell{5.451} & \cellbf{-0.0003}  & \cellbf{-5.0577} & \cellbf{-}          & \cellbf{-3.0244}  & \cell{ 61}     & \cell{38.44} \\ \hline 
52
\cell{AC12} & \cell{0.580} & \cellbf{-10.8645} & \cellbf{-9.9658} & \cellbf{-1.8757}    & \cellbf{-0.3414}  & \cell{150}     & \cell{86.72}  \\ \hline 
100
\cell{HE1}  & \cell{0.276} & \cellbf{-0.2457}  & \cellbf{-0.2071} & \cellbf{-0.2468}    & \cellbf{-0.2202}  & \cell{150}     & \cell{87.64}  \\ \hline %
32.8000, 4
\cell{HE3}  & \cell{0.087} & \cellbf{-0.4621}  & \cellbf{-2.3009} & \cellbf{-0.4063}    & \cellbf{-0.8702}  & \cell{ 47}     & \cell{48.80} \\ \hline %
700.8300, 25
\cell{HE4}  & \cell{0.234} & \cellbf{-0.7446}  & \cellbf{-1.9221} & \cellbf{-0.0909}    & \cellbf{-0.8647}  & \cell{ 63}     & \cell{71.66} \\ \hline 
21
\cell{HE5}  & \cell{0.234} & \cellbf{-0.1823}  & \cellbf{-}       & \cellbf{-0.2932}    & \cellbf{-0.0587}  & \cell{150}     & \cell{178.71}\\ \hline 
\cell{HE6}  & \cell{0.234} & \cellbf{-0.0050}  & \cellbf{-0.0050} & \cellbf{-0.0050}    & \cellbf{-0.0050}  & \cell{ 12}     & \cell{41.00} \\ \hline 
2
\cell{REA1} & \cell{1.991} & \cellbf{-16.3918} & \cellbf{-5.9736} & \cellbf{-1.7984}    & \cellbf{-3.8599}  & \cell{ 77}     & \cell{79.23} \\ \hline 
61
\cell{REA2} & \cell{2.011} & \cellbf{-7.0152}  & \cellbf{-10.0292}& \cellbf{-3.5928}    & \cellbf{-2.1778}  & \cell{ 40} & \cell{39.15} \\ \hline 
\cell{REA3} & \cell{0.000} & \cellbf{-0.0207}  & \cellbf{-0.0207} & \cellbf{-0.0207}    & \cellbf{-0.0207}  & \cell{150}& \cell{362.21} \\ \hline 
\cell{DIS2} & \cell{1.675} & \cellbf{-6.8510}  & \cellbf{-10.1207}& \cellbf{-8.3289}    & \cellbf{-8.4540}  & \cell{ 28} & \cell{37.28} \\ \hline 
\cell{DIS4} & \cell{1.442} & \cellbf{-36.7203} & \cellbf{-0.5420} & \cellbf{-92.2842}   & \cellbf{-8.0989}  & \cell{ 72} & \cell{124.23}\\ \hline 
\cell{WEC1} & \cell{0.008} & \cellbf{-8.9927}  & \cellbf{-8.7350} & \cellbf{-0.9657}    & \cellbf{-0.8779}  & \cell{150}& \cell{305.36} \\ \hline 
\cell{IH}   & \cell{0.000} & \cellbf{-0.5000}  & \cellbf{-0.5000} & \cellbf{-0.5000}    & \cellbf{-0.5000}  & \cell{  7} & \cell{39.41} \\ \hline 
\cell{CSE1} & \cell{0.000} & \cellbf{-0.4509}  & \cellbf{-0.4844} & \cellbf{-0.4490}    & \cellbf{-0.2360}  & \cell{ 38} & \cell{158.67}\\ \hline 
\cell{TF1}  & \cell{0.000} & \cellbf{-}        & \cellbf{-}       & \cellbf{-0.0618}    & \cellbf{-0.1544}  & \cell{ 56} & \cell{137.98}\\ \hline 
\cell{TF2}  & \cell{0.000} & \cellbf{-}        & \cellbf{-}       & \cellbf{-1.0e-5}    & \cellbf{-1.0e-5}  & \cell{  8} & \cell{20.41} \\ \hline 
\cell{TF3}  & \cell{0.000} & \cellbf{-}        & \cellbf{-}       & \cellbf{-0.0032}    & \cellbf{-0.0031}  & \cell{ 93} & \cell{237.93}\\ \hline 
\cell{NN1}  & \cell{3.606} & \cellbf{-3.0458}  & \cellbf{-4.4021} & \cellbf{-4.3358}    & \cellbf{-0.8746}  & \cell{ 12} & \cell{37.53}\\ \hline 
53.1700, 65
\cell{\textbf{NN5}$^{\textbf{a}}$}
            & \cell{0.420} & \cellbf{-0.0942}  & \cellbf{-0.0057} & \cellbf{-0.0942}    & \cellbf{-0.0913}  & \cell{ 11} & \cell{42.62} \\ \hline 
\cell{NN9}  & \cell{3.281} & \cellbf{-2.0789}  & \cellbf{-0.7048} & \cellbf{-}          & \cellbf{-0.0279}  & \cell{ 33} & \cell{111.41}\\ \hline
\cell{NN13} & \cell{1.945} & \cellbf{-3.2513}  & \cellbf{-4.5310} & \cellbf{-9.0741}    & \cellbf{-3.4318}  & \cell{150}& \cell{572.50} \\ \hline 
\cell{NN15} & \cell{0.000} & \cellbf{-6.9983}  & \cellbf{-11.0743}& \cellbf{-0.0278}    & \cellbf{-0.8353}  & \cell{150}& \cell{524.80} \\ \hline 
\cell{NN17} & \cell{1.170} & \cellbf{-0.6110}  & \cellbf{-0.5130} & \cellbf{-}          & \cellbf{-0.6008}  & \cell{ 99} & \cell{342.67}\\ \hline 
\end{tabular}
\end{scriptsize}
\end{center}
\vskip -0.4cm
\end{table}
The numerical results of four algorithms are reported in Table \ref{tb:exam2}. Here, we initialize the algorithm in HIFOO with the same initial guess $F^0 = 0$.
Since PENBMI and our methods solve the
same BMI problems, they are initialized by the same initial values for $P$, $F$ and $\beta$.

The notation in Table \ref{tb:exam2} consists of: \texttt{Name} is the name of problems, $\alpha_0(A)$, $\alpha_0(A_F)$ are the maximum real part of the
eigenvalues of the open-loop and closed-loop
matrices $A$, $A_F$, respectively; \texttt{iter} is the number of iterations, \texttt{time[s]} is the CPU time in second. The columns titled HIFOO, LMIRank and
PENBMI give the maximum real part of the
eigenvalues of the closed-loop system for a static output feedback controller computed by available software HIFOO \cite{Gumussoy09}, LMIRank \cite{Orsi2006}
and PENBMI \cite{Henrion2005},
respectively.  Our results can be found in the sixth column. The entries with a dash sign indicate that there is no feasible solution found.  Algorithm $1$
fails or makes only slow progress toward to
a local solution with $6$ problems: AC18, DIS5, PAS, NN6, NN7, NN12 in COMP$\textrm{l}_\textrm{e}$ib. Problems AC5 and NN5 are initialized with a different
matrix $F^0$ to avoid numerical problems.

Note that Algorithm \ref{alg:A1} as well as the algorithms implemented in HIFOO, LMIRank and PENBMI are local optimization methods, which only report a local
minimizer and these solutions may not be
the same.
To apply the LMIRank package for solving problem \eqref{eq:BMI_p2}, we have used a direct search procedure for finding $\alpha$.
The computational time of this procedure is very high compared with the other methods.

To conclude this subsection, we show that our method can be applied to solve the optimization problem of pseudo-spectral abscissa in static feedback controller
designs. This problem is described as
follows (see \cite{Burke2002,Leibfritz2004}):
\begin{equation}\label{eq:BMI_p3b}
\begin{array}{cl}
&\displaystyle\max_{\beta,\mu,\omega,F,P}\beta\\
& \textrm{s.t.}~ \begin{bmatrix} 2\beta P \!+\! A_F^T\!\!P \!+\! PA_F \!+\! \mu I \!-\! \omega I_z \!\!\! & \!\!\! \varepsilon P\\ \varepsilon P
&\omega I \end{bmatrix} \!\preceq \! 0, ~ P\succ 0, ~ P = P^T, ~ \mu > 0,
\end{array}
\end{equation}
where $A_F = A + BFC$ as before and $\omega\leq 0$.

Using the same notation $B_F = A + BFC + \beta I$ as in \eqref{eq:BMI_p2} and applying the statement b) of Lemma \ref{le:key_lemma}, the BMI constraint in this
problem can be transformed into a
psd-convex-concave one:
\begin{align*}\label{eq:BMI_reform}
&\begin{bmatrix}
B_F^T\!\!B_F \!+\! P^T\!\!P \!+\! (\mu \!-\! \omega)I \!\!&\!\! \epsilon P\\ \epsilon P \!\!&\!\! \omega I
\end{bmatrix}
\!-\! \begin{bmatrix}(B_F\!-\!P)^T\!\!(B_F\!-\!P) \!\!&\!\! 0\\ 0 \!\!&\!\! 0\end{bmatrix} \!\preceq \!0.
\end{align*}
If we denote the linearization of $(B_F-P)^T(B_F-P)$ at the iteration $k$ by $H_k$, i.e. $H_k = (B_F-P)^T(B_F^k-P^k) + (B_F^k -P^k)^T(B_F-P) -
(B_F^k-P^k)^T(B_F^k-P^k)$, then the linearized constraint
in the subproblem \eqref{eq:psd_convex_subprob} can be represented as an LMI thanks to Lemma \ref{le:key_lemma}:
\begin{equation*}
\begin{bmatrix}
H_k + (\omega-\mu)I & B_F^T & P & -\varepsilon P\\ B_F & I & 0 & 0\\
P & 0 & I & 0\\ -\varepsilon P & 0 & 0 & -\omega I
\end{bmatrix} \succeq 0.
\end{equation*}
Hence, Algorithm \ref{alg:A1} can be applied to solve problem \eqref{eq:BMI_p3b}.

\begin{remark}
If we define $\bar{F} := BFC$ then the bilinear matrix mapping $A_F^TP + PA_F$ can be rewritten as
\begin{align*}
A_F^TP + PA_F &= \frac{1}{2}\left[(P+\bar{F})^T(P+\bar{F}) - (P-\bar{F})^T(P-\bar{F})\right] - A^TP - PA.
\end{align*}
Using this decomposition, one can avoid the contribution of matrix $A$ on the bilinear term. Consequently, Algorithm \ref{alg:A1} may work better in some
specific problems.
\end{remark}

\subsection{$\mathcal{H}_2$ optimization: BMI formulation}\label{subsec:H2_prob}
In this subsection, we consider an optimization problem arising in the $\mathcal{H}_2$ synthesis of the linear system~\eqref{eq:LTI}.
Let us assume that $D_{12} = 0$ and $D_{21} = 0$, then this problem is formulated as the following optimization problem with BMI constraints
\cite{Leibfritz2003}.
\begin{equation}\label{eq:BMI_H2_prob}
\begin{array}{cl}
\displaystyle\min_{F, Q, X} &\textrm{trace}(X)\\
\textrm{s.t.}\!\!\!\!\!\!\!\! & ~ (A+BFC)Q + Q(A+BFC)^T + B_1B_1^T \preceq 0, ~~\begin{bmatrix}X & C_1Q\\ QC_1^T & Q \end{bmatrix}\succeq 0, ~~ Q \succ 0.
\end{array}
\end{equation}
Here, we also assume that $B_1B_1^T$ is positive definite.
Otherwise, we use $B_1B_1^T + \varepsilon I$ instead of $B_1B_1^T$ with $\varepsilon = 10^{-5}$ in \eqref{eq:BMI_H2_prob}.

In order to apply Algorithm \ref{alg:A1} for solving problem \eqref{eq:BMI_H2_prob}, a starting point $x^0\in\textrm{ri}(\mathcal{D})$ is required. This task
can be done by performing some extra steps
called \textit{Phase 1}. The algorithm is now split in two phases as follows.

\noindent\textbf{Phase 1: }(\textit{Determine a starting point $x^0$}).
\begin{itemize}
\item[]\textit{Step 1.} If $\alpha_0(A + A^T) < 0$ then we set $F^0 := 0$. Otherwise, go to
Step 3.
\item[]\textit{Step 2.} Solve the following optimization problem with LMI constraints:
\begin{equation}\label{eq:LMI_H2_prob}
\begin{array}{cl}
\displaystyle\min_{Q, X} \textrm{trace}(X) ~~~~
\textrm{s.t.} ~~ A_{F^0}Q + QA_{F^0}^T + B_1B_1^T \prec 0, ~~\begin{bmatrix}X & C_1Q\\ QC_1^T & Q \end{bmatrix}\succ 0, ~~Q \succ 0,
\end{array}
\end{equation}
where $A_{F^0} := A + BF^0C$. If this problem has a solution $Q^0$ and $X^0$ then terminate Phase 1 and using $F^0$ together with $Q^0, X^0$ as a starting point
$x^0$ for Phase 2. Otherwise, go to
Step 3.
\item[]\textit{Step 3.} Solve the following feasibility problem with LMI constraints:
\begin{equation*}\label{eq:LMI_H2_phase1}
\begin{array}{cl}
\textrm{Find}& \textrm{$P\succ 0$ and $K$ such that:}\\
&\begin{bmatrix}PA+A^TP+KC+C^TK^T & PB_1\\ B_1^TP &-\sigma_0^2I_{w}\end{bmatrix} \preceq 0, ~~\begin{bmatrix}X & C_1\\ C_1^T & P\end{bmatrix} \succeq 0,
\end{array}
\end{equation*}
to obtain $K^{*}$ and $P^{*}$, where $\sigma_0$ is a given regularization factor.
Compute $F^{*} := B^{+}(P^{*})^{-1}K^{*}$, where $B^{+}$ is a pseudo-inverse of $B$, and resolve problem \eqref{eq:LMI_H2_prob} with $F^0 := F^{*}$.
If problem \eqref{eq:LMI_H2_prob} has a solution $Q^0$ and $X^0$ then terminate Phase 1 and set $x^0 := (F^0, Q^0, X^0)$.
Otherwise, perform Step 4.
\item[]\textit{Step 4.} Apply the method in Subsection~\ref{subsec:SOF_design} to solve the following BMI feasibility problem:
\begin{align}\label{eq:BMI_H2_phase1}
\textrm{Find }& \textrm{$F$ and $Q\succ 0$ such that}: ~~ (A+BFC)Q + Q(A+BFC)^T + B_1B_1^T \prec 0.
\end{align}
If this problem has a solution $F^0$ then go back to Step 2. Otherwise, declare that no strictly feasible point is found.
\end{itemize}
\noindent\textbf{Phase 2: }(\textit{Solve problem \eqref{eq:BMI_H2_prob}}). Perform Algorithm \ref{alg:A1} with the starting point $x^0$ found at Phase 1.
\eofproof

Note that Step 3 of Phase 1 corresponds to determining a full state feedback controller and approximating it subsequently with an output feedback controller.
Step 4 of Phase 1 is usually expensive.
Therefore, in our numerical implementation, we terminate Step 4 after finding a point such that $\alpha_0((A+BFC)Q + Q(A+BFC)^T + B_1B_1^T) \leq -0.1$.

\begin{remark}\label{re:stop_phase1}
The algorithm described in Phase 1 is finite. It terminates either at Step 4 if no feasible point is found or at Step 2 if a feasible point is found.
Indeed, if a feasible matrix $F^0$ is found at Step 4, the first BMI constraint of \eqref{eq:LMI_H2_prob} is feasible with some $Q \succ 0$. Thus we can find an
appropriate matrix $X$ such that $X -
CQC^T \succ 0$, which implies the second LMI constraint of \eqref{eq:LMI_H2_prob} is satisfied. Consequently, problem \eqref{eq:LMI_H2_prob} has a solution.
\end{remark}

The method used in Phase 1 is heuristic. It can be improved when we apply to a certain problem.
However, as we can see in the numerical results, it performs quite acceptable for majority of the test problems.

In the following numerical examples, we implement Phase 1 and Phase 2 of the algorithm using the decomposition
\begin{align*}
A_FQ + QA_F^T + B_1B_1^T &= \frac{1}{2}(A_F+Q)(A_F+Q)^T  + B_1B_1^T - \frac{1}{2}(A_F-Q)(A_F-Q)^T
\end{align*}
for the BMI form at the left-top corner of the first constraint in \eqref{eq:BMI_H2_prob}.
The regularization parameters and the stopping criterion for Algorithm \ref{alg:A1} are chosen as in Subsection \ref{subsec:sparse_SOF} and $K_{\max} = 300$.
\begin{table}[!htbp]
\begin{center}
\begin{scriptsize}
\renewcommand{\arraystretch}{0.7}
\caption{$\mathcal{H}_2$ synthesis benchmarks on COMP$\textrm{l}_{\textrm{e}}$ib plants}\label{tb:H2_prob}
\newcommand{\cell}[1]{{\!}#1{\!}}
\newcommand{\cellbf}[1]{{\!}\textbf{#1}{\!}}
\begin{tabular}{|l|c|c|c|c|c|c|r|r|r|r|r|}\hline
\multicolumn{6}{|c|}{Problem} & \multicolumn{2}{c|}{Other Results, $\mathcal{H}_2$} & \multicolumn{3}{|c|}{\!\!{Results and Performances  }\!\!} \\ \hline
\texttt{Name}\!\!&\!\!\!\!$n_x$\!\!\!\!&\!\!\!\!$n_y$\!\!\!\!&\!\!\!\!$n_u$\!\!\!\!& \!\!\!\!$n_z$\!\!\!\! & \!\!\!\!$n_w$\!\!\!\!& \!\!HIFOO~~~\!\! &
\!\!PENBMI~~~\!\! &
\!\!\!$\mathcal{H}_2$~~~\!\!\!&
\!\!\texttt{iter}\!\! & \!\texttt{time\![s]}~~~\!\! \\ \hline
\cell{AC1$^{b}$}    & \cell{5}  & \cell{3} & \cell{3} & \cell{2} & \cell{3} & \cellbf{0.0250} & \cellbf{0.0061} &  \cellbf{0.0540} & \cell{3}   & \cell{3.380}
\\ \hline
\cell{AC2$^{b}$}    & \cell{5}  & \cell{3} & \cell{3} & \cell{5} & \cell{3} & \cellbf{0.0257} & \cellbf{0.0075} &  \cellbf{0.0540} & \cell{3}   & \cell{3.390}
\\ \hline
\cell{AC3}          & \cell{5}  & \cell{4} & \cell{2} & \cell{5} & \cell{5} & \cellbf{2.0964} & \cellbf{2.0823} &  \cellbf{2.1117} & \cell{210} & \cell{73.380}
\\ \hline
\cell{AC4}          & \cell{4}  & \cell{2} & \cell{1} & \cell{2} & \cell{2} & \cellbf{11.0269}& \cellbf{-}      &  \cellbf{11.0269}& \cell{2}   & \cell{0.990}
\\ \hline
\cell{AC6}          & \cell{7}  & \cell{4} & \cell{2} & \cell{7} & \cell{7} & \cellbf{2.8648} & \cellbf{2.8648} &  \cellbf{2.8664} & \cell{153} & \cell{124.230}
\\ \hline
\cell{AC7}          & \cell{9}  & \cell{2} & \cell{1} & \cell{1} & \cell{4} & \cellbf{0.0172} & \cellbf{0.0162} &  \cellbf{0.0176} & \cell{1}   & \cell{0.670}
\\ \hline
\cell{AC8}          & \cell{9}  & \cell{5} & \cell{1} & \cell{2} & \cell{10}& \cellbf{0.6330} & \cellbf{0.7403} &  \cellbf{0.6395} & \cell{300} & \cell{282.760}
\\ \hline
\cell{AC12$^{b}$}   & \cell{4}  & \cell{4} & \cell{3} & \cell{1} & \cell{3} & \cellbf{0.0022} & \cellbf{0.0106} &  \cellbf{0.0992} & \cell{36}  & \cell{28.540}
\\ \hline
\cell{AC15$^{b}$}   & \cell{4}  & \cell{3} & \cell{2} & \cell{6} & \cell{4} & \cellbf{1.5458} & \cellbf{1.4811} &  \cellbf{1.5181} & \cell{264} & \cell{85.390}
\\ \hline
\cell{AC16$^{b}$}   & \cell{4}  & \cell{4} & \cell{2} & \cell{6} & \cell{4} & \cellbf{1.4769} & \cellbf{1.4016} &  \cellbf{1.4427} & \cell{300} & \cell{99.790}
\\ \hline
\cell{AC17}         & \cell{4}  & \cell{2} & \cell{1} & \cell{4} & \cell{4} & \cellbf{1.5364} & \cellbf{1.5347} &  \cellbf{1.5507} & \cell{171} & \cell{49.350}
\\ \hline
\cell{HE2}          & \cell{4}  & \cell{2} & \cell{2} & \cell{4} & \cell{4} & \cellbf{3.4362} & \cellbf{3.4362} &  \cellbf{4.7406} & \cell{263} & \cell{97.310}
\\ \hline
\cell{HE3$^{b}$}    & \cell{8}  & \cell{6} & \cell{4} & \cell{10}& \cell{1} & \cellbf{0.0197} & \cellbf{0.0071} &  \cellbf{0.1596} & \cell{249} & \cell{217.360}
\\ \hline
\cell{HE4$^{b}$}    & \cell{8}  & \cell{6} & \cell{4} & \cell{12}& \cell{8} & \cellbf{6.6436} & \cellbf{6.5785} &  \cellbf{7.1242} & \cell{300} & \cell{412.830}
\\ \hline
\cell{REA1}         & \cell{4}  & \cell{3} & \cell{2} & \cell{4} & \cell{4} & \cellbf{0.9442} & \cellbf{0.9422} &  \cellbf{1.0622} & \cell{249} & \cell{80.810}
\\ \hline
\cell{REA2$^{b}$}   & \cell{4}  & \cell{2} & \cell{2} & \cell{4} & \cell{4} & \cellbf{1.0339} & \cellbf{1.0229} &  \cellbf{1.1989} & \cell{300} & \cell{101.730}
\\ \hline
\cell{DIS1}         & \cell{8}  & \cell{4} & \cell{4} & \cell{8} & \cell{1} & \cellbf{0.6705} & \cellbf{0.1174} &  \cellbf{0.7427} & \cell{300} & \cell{255.810}
\\ \hline
\cell{DIS2}         & \cell{3}  & \cell{2} & \cell{2} & \cell{3} & \cell{3} & \cellbf{0.4013} & \cellbf{0.3700} &  \cellbf{0.3819} & \cell{4}   & \cell{1.370}
\\ \hline
\cell{DIS3}         & \cell{6}  & \cell{4} & \cell{4} & \cell{6} & \cell{6} & \cellbf{0.9527} & \cellbf{0.9434} &  \cellbf{1.0322} & \cell{300} & \cell{210.470}
\\ \hline
\cell{DIS4}         & \cell{6}  & \cell{6} & \cell{4} & \cell{6} & \cell{6} & \cellbf{1.0117} & \cellbf{0.9696} &  \cellbf{1.0276} & \cell{300} & \cell{210.690}
\\ \hline
\cell{WEC1$^{b}$}   & \cell{10} & \cell{4} & \cell{3} & \cell{10}& \cell{10}& \cellbf{7.3940} & \cellbf{8.1032} &  \cellbf{12.9093}& \cell{119} & \cell{190.150}
\\ \hline
\cell{WEC2$^{b}$}   & \cell{10} & \cell{4} & \cell{3} & \cell{10}& \cell{10}& \cellbf{6.7908} & \cellbf{7.6502} &  \cellbf{12.2102}& \cell{261} & \cell{407.470}
\\ \hline
\cell{AGS}          & \cell{12} & \cell{2} & \cell{2} & \cell{12}& \cell{12}& \cellbf{6.9737} & \cellbf{6.9737} &  \cellbf{6.9838} & \cell{18}  & \cell{28.830}
\\ \hline
\cell{BDT1}         & \cell{11} & \cell{3} & \cell{3} & \cell{6} & \cell{1} & \cellbf{0.0024} & \cellbf{-} &  \cellbf{0.0017} & \cell{51}  & \cell{64.410} \\
\hline
\cell{MFP}          & \cell{4}  & \cell{2} & \cell{3} & \cell{4} & \cell{4} & \cellbf{6.9724} & \cellbf{6.9724} &  \cellbf{7.0354} & \cell{300} & \cell{114.560}
\\ \hline
\cell{PSM}          & \cell{7}  & \cell{3} & \cell{2} & \cell{5} & \cell{2} & \cellbf{0.0330} & \cellbf{0.0007} &  \cellbf{0.1753} & \cell{300} & \cell{217.250}
\\ \hline
\cell{EB2$^{b}$}    & \cell{10} & \cell{1} & \cell{1} & \cell{2} & \cell{2} & \cellbf{0.0640} & \cellbf{0.0084} &  \cellbf{0.1604} & \cell{114} & \cell{131.380}
\\ \hline
\cell{EB3}          & \cell{10} & \cell{1} & \cell{1} & \cell{2} & \cell{2} & \cellbf{0.0732} & \cellbf{0.0072} &  \cellbf{0.0079} & \cell{7}   & \cell{6.240}
\\ \hline
\cell{TF1$^{b}$}    & \cell{7}  & \cell{4} & \cell{2} & \cell{4} & \cell{1} & \cellbf{0.0945} & \cellbf{-} &  \cellbf{0.1599} & \cell{192} & \cell{166.810} \\
\hline
\cell{TF2}          & \cell{7}  & \cell{3} & \cell{2} & \cell{4} & \cell{1} & \cellbf{11.1803}& \cellbf{-} &  \cellbf{11.1803}& \cell{3}   & \cell{2.810} \\
\hline
\cell{TF3$^{b}$}    & \cell{7}  & \cell{3} & \cell{2} & \cell{4} & \cell{1} & \cellbf{0.1943} & \cellbf{0.1424} &  \cellbf{0.2565} & \cell{138} & \cell{128.250}
\\ \hline
\cell{NN2}          & \cell{2}  & \cell{1} & \cell{1} & \cell{2} & \cell{2} & \cellbf{1.1892} & \cellbf{1.1892} &  \cellbf{1.1892} & \cell{4}   & \cell{1.090}
\\ \hline
\cell{NN4}          & \cell{4}  & \cell{3} & \cell{2} & \cell{4} & \cell{4} & \cellbf{1.8341} & \cellbf{1.8335} &  \cellbf{1.8590} & \cell{222} & \cell{67.260}
\\ \hline
\cell{NN8}          & \cell{3}  & \cell{2} & \cell{2} & \cell{3} & \cell{3} & \cellbf{1.5152} & \cellbf{1.5117} &  \cellbf{1.5725} & \cell{183} & \cell{50.170}
\\ \hline
\cell{NN11}         & \cell{16} & \cell{5} & \cell{3} & \cell{3} & \cell{3} & \cellbf{0.1178} & \cellbf{0.0790} &  \cellbf{0.1263} & \cell{39}  & \cell{91.070}
\\ \hline
\cell{NN13$^{b}$}   & \cell{6}  & \cell{2} & \cell{2} & \cell{3} & \cell{3} & \cellbf{26.1012}& \cellbf{26.1314} &  \cellbf{62.3995}& \cell{138} &
\cell{112.750} \\ \hline
\cell{NN14$^{b}$}   & \cell{6}  & \cell{2} & \cell{2} & \cell{3} & \cell{3} & \cellbf{26.1448}& \cellbf{26.1314} &  \cellbf{62.3995}& \cell{138} &
\cell{110.020} \\ \hline
\cell{NN15}         & \cell{3}  & \cell{2} & \cell{2} & \cell{4} & \cell{1} & \cellbf{0.0245} & \cellbf{-} &  \cellbf{0.0210} & \cell{6}   & \cell{2.060} \\
\hline
\cell{NN16$^{b}$}   & \cell{8}  & \cell{4} & \cell{4} & \cell{4} & \cell{8} & \cellbf{0.1195} & \cellbf{0.1195} &  \cellbf{0.1195} & \cell{3}   & \cell{23.030}
\\ \hline
\cell{NN17}         & \cell{3}  & \cell{1} & \cell{2} & \cell{2} & \cell{1} & \cellbf{3.2530} & \cellbf{3.2404} &  \cellbf{3.3329} & \cell{300} & \cell{88.770}
\\ \hline
\end{tabular}
\end{scriptsize}
\end{center}
\vskip -0.4cm
\end{table}
We test the algorithm for many problems in COMP$\textrm{l}_{\textrm{e}}$ib and the computational results are reported in Table \ref{tb:H2_prob}.
For the comparison purpose, we also carry out the test with HIFOO \cite{Gumussoy09} and PENBMI \cite{Henrion2005}, and the results are put in the columns marked
by HIFOO and PENBMI in Table
\ref{tb:H2_prob}, respectively. The initial controller for HIFOO is set to $F^0$ and the BMI parameters for PENBMI are initialized with $(F, Q, X)=(F^0, Q^0,
X^0)$.
Here, $n, n_y, n_z, n_w, n_u$ are the dimensions of problems, the columns titled HIFOO and PENBMI give the $\mathcal{H}_2$ norm of the closed-loop system for
the static output feedback controller
computed by HIFOO and PENBMI; \texttt{iter} and \texttt{time[s]} are the number of iterations and CPU time in second of Algorithm \ref{alg:A1}, respectively,
included Phase 1 and Phase 2. Problems
marked by
``b'' mean that Step 4 in Phase 1 is performed. In Table \ref{tb:H2_prob}, we only report the problems that were solved by Algorithm \ref{alg:A1}. The numerical
results allow us to conclude that
Algorithm \ref{alg:A1},
PENBMI and HIFOO report similar values for majority of the test problems in COMP$\textrm{l}_{\textrm{e}}$ib.

If $D_{12} \neq 0$ then the second LMI constraint of \eqref{eq:BMI_H2_prob} becomes a BMI constraint:
\begin{equation}\label{eq:BMI_H2_const2}
\begin{bmatrix} X & (C_1 + D_{12}FC)Q \\ Q(C_1+D_{12}FC)^T & Q\end{bmatrix} \succeq 0,
\end{equation}
which is equivalent to $X - C_FQC_F^T \succeq 0$, where $C_F := C_1 + D_{12}FC$.
Since $f(Q) := Q^{-1}$ is convex on $\mathbf{S}^{n_x}_{++}$ (see Lemma \ref{le:key_lemma_a} a)), this BMI constraint can be reformulated as a convex-concave
matrix inequality
constraint of
the form:
\begin{equation}\label{eq:BMI_H2_const2_DC}
\begin{bmatrix} X & C_F \\ C_F^T & O\end{bmatrix} + \begin{bmatrix}O & O\\ O & Q^{-1}\end{bmatrix}\succeq 0.
\end{equation}
By linearizing the concave term $-f(Q)$ at $Q = Q^k$ as $(Q^k)^{-1} - (Q^{k})^{-1}(Q-Q^k)(Q^k)^{-1}$ (see \cite{Boyd2004}), the resulting constraint can be
written as an LMI constraint. Therefore,
Algorithm \ref{alg:A1} can be applied to solve problem \eqref{eq:BMI_H2_const2} in the case $D_{12}\neq 0$.

\subsection{$\mathcal{H}_{\infty}$ optimization: BMI formulation}\label{subsec:Hinf_prob}
Alternatively, we can also apply Algorithm \ref{alg:A1} to solve the optimization with BMI constraints arising in $\mathcal{H}_{\infty}$ optimization of the
linear system \eqref{eq:LTI}.
Let us assume that $D_{21} = 0$, then this problem is reformulated as the following optimization problem with BMI constraints \cite{Leibfritz2003}:
\begin{equation}\label{eq:BMI_Hinf_prob}
\begin{array}{cl}
\displaystyle\min_{F, X, \gamma} &\gamma \\
\textrm{s.t.}\!\!\!\!\!\!\!\! & ~\begin{bmatrix}A_F^TX + XA_F &  XB_1 & C_F^T\\ B_1^TX & -\gamma I_w & D_{11}^T\\ C_F & D_{11} & -\gamma I_z\end{bmatrix} \prec
0, ~~ X \succ 0, ~ \gamma > 0.
\end{array}
\end{equation}
Here, as before, $A_F = A + BFC$ and $C_F = C_1 + D_{12}FC$.
The bilinear matrix term $A_F^TX + XA_F$ at the top-corner of the last constraint can be decomposed as \eqref{eq:BMI_p1_dc1} or \eqref{eq:BMI_p1_dc2}.
Therefore, we can use these decompositions to
transform problem
\eqref{eq:BMI_Hinf_prob} into \eqref{eq:MDC_program}. After linearization, the resulting subproblem is also rewritten as a standard SDP problem by applying
Lemma \ref{le:key_lemma}. We omit this
specification here.

To determine a starting point, we perform Phase 1 which is similar to the one carried out in the $\h_2$-optimization subsection.

\noindent\textbf{Phase 1.} (\textit{Determine a starting point $x^0\in\textrm{ri}(\mathcal{D})$}).
\begin{itemize}
\item[]\textit{Step 1. } If $\alpha_0(A^T+A) < 0$ then set $F^0 = 0$. Otherwise, go to Step 3.
\item[]\textit{Step 2. } Solve the following optimization with LMI constraints
\begin{equation}\label{eq:LMI_Hinf_prob}
\begin{array}{cl}
\displaystyle\min_{\gamma, X} &\gamma ~~~ \textrm{s.t.} ~~~ 
\begin{bmatrix}A_{F^0}^TX + XA_{F^0} & XB_1 & C_{F^0}^T\\ B_1^TX & -\gamma I_w & D_{11}^T\\ C_{F^0} & D_{11} & -\gamma I_z \end{bmatrix} \!\prec 0, ~~X \succ 0,
~\gamma > 0,
\end{array}
\end{equation}
where $A_{F^0} := A + BF^0C$ and $C_{F^0} := C_1 + D_{12}F^0C$. If this problem has a solution $\gamma^0$ and $X^0$ then terminate Phase 1 and using $F^0$
together with $\gamma^0, X^0$ as a starting
point $x^0$ for Phase 2. Otherwise, go to Step 3.
\item[]\textit{Step 3.} Solve the following feasibility problem of LMI constraints:
\begin{equation*}\label{eq:LMI_Hinf_phase1}
\begin{array}{cl}
\textrm{Find $P\succ 0$, $\gamma>0$ and $K$ such that:} \begin{bmatrix}\! PA^T\!\! + \!AP \!+\! K^TB^T\!\! +\! BK \!\!& B_1& \!\! PC_1 \!+\! K^T\!D_{12}^T\!\!
\\ B_1^T \!&\! -\gamma
I_{w} \!&\! D_{11}^T\!\! \\C_1P \!+\! D_{12}K \!&\! D_{11} \!&\!
-\gamma I_z\!\!\end{bmatrix} \!\prec\! 0,\\
\end{array}
\end{equation*}
to obtain $K^{*}$, $\gamma^{*}$ and $P^{*}$.
Compute $F^{*} := K^{*}(P^{*})^{-1}C^{+}$, where $C^{+}$ is a pseudo-inverse of $C$, and resolve problem \eqref{eq:LMI_Hinf_prob} with $F^0 := F^{*}$.
If problem \eqref{eq:LMI_Hinf_prob} has a solution $X^0$ and $\gamma^0$ then terminate Phase 1. Set $x^0 := (F^0, X^0,\gamma^0)$.
Otherwise, perform Step 4.
\item[]\textit{Step 4.} Apply the method in Subsection \ref{subsec:SOF_design} to solve the following BMI feasibility problem:
\begin{align}\label{eq:BMI_Hinf_phase1}
\textrm{Find } ~\textrm{$F$ and $P\succ 0$ such that}: ~~ (A+BFC)^TP + P(A+BFC) \prec 0.
\end{align}
If this problem has a solution $F^0$ then go back to Step 2. Otherwise, declare that no strictly feasible point for \eqref{eq:BMI_Hinf_prob} is found.
{~}\eofproof
\end{itemize}
As in the $\mathcal{H}_2$ problem, Phase 1 of the $\mathcal{H}_{\infty}$ is also terminated after finitely many iterations.
In this subsection, we also test this algorithm for several problems in COMP$\textrm{l}_{\textrm{e}}$ib using the same parameters and the stopping criterion as
in the previous subsection. The
computational results are shown in Table \ref{tb:H_inf_problems}.
\begin{table}[!ht]
\begin{center}
\begin{scriptsize}
\renewcommand{\arraystretch}{0.7}
\caption{$\mathcal{H}_{\infty}$ synthesis benchmarks on  COMP$\textrm{l}_{\textrm{e}}$ib plants}\label{tb:H_inf_problems}
\newcommand{\cell}[1]{{\!}#1{\!}}
\newcommand{\cellbf}[1]{{\!}\textbf{#1}{\!}}
\begin{tabular}{|l|c|c|c|c|c|r|r|r|r|r|r|}\hline
\multicolumn{6}{|c|}{Problem} & \multicolumn{2}{c|}{Other Results, $\mathcal{H}_\infty$} & \multicolumn{3}{|c|}{\!\!{Results and Performances  }\!\!} \\ \hline
\texttt{Name}\!\! & \!\!\!\!$n_x$\!\!\!\! &\!\!\!\!$n_y$\!\!\!\! &\!\!\!\!$n_u$\!\!\!\!& \!\!\!\!$n_z$\!\!\!\! & \!\!\!\!$n_w$\!\!\!\!& \!\!HIFOO\!\! &
\!\!PENBMI~~~~\!\! &
\!\!\!$\mathcal{H}_\infty$~~\!\!\! &
\!\!\texttt{iter}\!\! & \!\texttt{time\![s]}~~~\!\! \\ \hline
\cell{AC1}       & \cell{5} & \cell{3} & \cell{3} & \cell{2} & \cell{3} & \cellbf{0.0000} & \cellbf{-} & \cellbf{0.0177} & \cell{93}   & \cell{28.050} \\ \hline
\cell{AC2}       & \cell{5} & \cell{3} & \cell{3} & \cell{5} & \cell{3} & \cellbf{0.1115} & \cellbf{-} & \cellbf{0.1140} & \cell{99}   & \cell{32.540} \\ \hline
\cell{AC3}       & \cell{5} & \cell{4} & \cell{2} & \cell{5} & \cell{5} & \cellbf{4.7021} & \cellbf{-} & \cellbf{3.4859} & \cell{210}  & \cell{76.170} \\ \hline
\cell{AC4}       & \cell{4} & \cell{2} & \cell{1} & \cell{2} & \cell{2} & \cellbf{0.9355} & \cellbf{-} & \cellbf{69.9900}& \cell{2}    & \cell{2.620} \\ \hline
\cell{AC6}       & \cell{7} & \cell{4} & \cell{2} & \cell{7} & \cell{7} & \cellbf{4.1140} & \cellbf{-} & \cellbf{4.1954} & \cell{167}  & \cell{138.570} \\
\hline
\cell{AC7}       & \cell{9} & \cell{2} & \cell{1} & \cell{1} & \cell{4} & \cellbf{0.0651} & \cellbf{0.3810} & \cellbf{0.0548} & \cell{300}  & \cell{278.330} \\
\hline
\cell{AC8}       & \cell{9} & \cell{5} & \cell{1} & \cell{2} & \cell{10}& \cellbf{2.0050} & \cellbf{-} & \cellbf{3.0520} & \cell{247}  & \cell{298.070} \\
\hline
\cell{AC9$^{b}$} & \cell{10}& \cell{5} & \cell{4} & \cell{2} & \cell{10}& \cellbf{1.0048} & \cellbf{-} & \cellbf{0.9237} & \cell{300}  & \cell{470.910} \\
\hline
\cell{AC11$^{b}$}& \cell{5} & \cell{4} & \cell{2} & \cell{5} & \cell{5} & \cellbf{3.5603} & \cellbf{-} & \cellbf{3.0104} & \cell{68}   & \cell{60.260} \\ \hline
\cell{AC12}      & \cell{4} & \cell{4} & \cell{3} & \cell{1} & \cell{3} & \cellbf{0.3160} & \cellbf{-} & \cellbf{2.3025} & \cell{300}  & \cell{181.870} \\
\hline
\cell{AC15}      & \cell{4} & \cell{3} & \cell{2} & \cell{6} & \cell{4} & \cellbf{15.2074}& \cellbf{427.4106} & \cellbf{15.1995}& \cell{105}  & \cell{36.700} \\
\hline
\cell{AC16}      & \cell{4} & \cell{4} & \cell{2} & \cell{6} & \cell{4} & \cellbf{15.4969}& \cellbf{-} & \cellbf{14.9881}& \cell{186}  & \cell{68.820} \\ \hline
\cell{AC17}      & \cell{4} & \cell{2} & \cell{1} & \cell{4} & \cell{4} & \cellbf{6.6124} & \cellbf{-} & \cellbf{6.6373} & \cell{129}  & \cell{42.400} \\ \hline
\cell{HE1$^{b}$} & \cell{4} & \cell{1} & \cell{2} & \cell{2} & \cell{2} & \cellbf{0.1540} & \cellbf{1.5258} & \cellbf{0.1807} & \cell{300}  & \cell{143.520} \\
\hline
\cell{HE2}       & \cell{4} & \cell{2} & \cell{2} & \cell{4} & \cell{4} & \cellbf{4.4931} & \cellbf{-} & \cellbf{6.7846} & \cell{177}  & \cell{67.470} \\ \hline
\cell{HE3}       & \cell{8} & \cell{6} & \cell{4} & \cell{10}& \cell{1} & \cellbf{0.8545} & \cellbf{1.6843} & \cellbf{0.9243} & \cell{105}  & \cell{95.000} \\
\hline
\cell{HE4$^{b}$} & \cell{8} & \cell{6} & \cell{4} & \cell{12}& \cell{8} & \cellbf{23.3448}& \cellbf{-} & \cellbf{22.8713}& \cell{252}  & \cell{325.580} \\
\hline
\cell{HE5$^{b}$} & \cell{8} & \cell{2} & \cell{4} & \cell{4} & \cell{3} & \cellbf{8.8952} & \cellbf{-} & \cellbf{37.3906}& \cell{300}  & \cell{430.820} \\
\hline
\cell{REA1}      & \cell{4} & \cell{3} & \cell{2} & \cell{4} & \cell{4} & \cellbf{0.8975} & \cellbf{-} & \cellbf{0.8815} & \cell{96}   & \cell{34.430} \\ \hline
\cell{REA2$^{b}$}& \cell{4} & \cell{2} & \cell{2} & \cell{4} & \cell{4} & \cellbf{1.1881} & \cellbf{-} & \cellbf{1.4188} & \cell{300}  & \cell{118.320} \\
\hline
\cell{REA3}      & \cell{12}& \cell{3} & \cell{1} & \cell{12}& \cell{12}& \cellbf{74.2513}& \cellbf{74.4460} & \cellbf{74.5478}& \cell{2}    & \cell{234.800} \\
\hline
\cell{DIS1}      & \cell{8} & \cell{4} & \cell{4} & \cell{8} & \cell{1} & \cellbf{4.1716} & \cellbf{-} & \cellbf{4.1943} & \cell{93}   & \cell{66.130} \\ \hline
\cell{DIS2}      & \cell{3} & \cell{2} & \cell{2} & \cell{3} & \cell{3} & \cellbf{1.0548} & \cellbf{1.7423} & \cellbf{1.1546} & \cell{54}   & \cell{17.120} \\
\hline
\cell{DIS3}      & \cell{6} & \cell{4} & \cell{4} & \cell{6} & \cell{6} & \cellbf{1.0816} & \cellbf{-} & \cellbf{1.1382} & \cell{285}  & \cell{195.960} \\
\hline
\cell{DIS4}      & \cell{6} & \cell{6} & \cell{4} & \cell{6} & \cell{6} & \cellbf{0.7465} & \cellbf{-} & \cellbf{0.7498} & \cell{126}  & \cell{93.220} \\ \hline
\cell{TG1$^{b}$} & \cell{10}& \cell{2} & \cell{2} & \cell{10}& \cell{10}& \cellbf{12.8462}& \cellbf{-} & \cellbf{12.9336}& \cell{45}   & \cell{84.380} \\ \hline
\cell{AGS}       & \cell{12}& \cell{2} & \cell{2} & \cell{12}& \cell{12}& \cellbf{8.1732} & \cellbf{188.0315} & \cellbf{8.1732} & \cell{24}   & \cell{55.290} \\
\hline
\cell{WEC2}      & \cell{10}& \cell{4} & \cell{3} & \cell{10}& \cell{10}& \cellbf{4.2726} & \cellbf{32.9935} & \cellbf{6.6082} & \cell{300}  & \cell{476.010} \\
\hline
\cell{WEC3}      & \cell{10}& \cell{4} & \cell{3} & \cell{10}& \cell{10}& \cellbf{4.4497} & \cellbf{200.1467} & \cellbf{6.8402} & \cell{300}  & \cell{425.330}
\\ \hline
\cell{BDT1}      & \cell{11}& \cell{3} & \cell{3} & \cell{6} & \cell{1} & \cellbf{0.2664} & \cellbf{-} & \cellbf{0.8562} & \cell{29}   & \cell{40.910} \\ \hline
\cell{MFP}       & \cell{4} & \cell{2} & \cell{3} & \cell{4} & \cell{4} & \cellbf{31.5899}& \cellbf{-} & \cellbf{31.6079}& \cell{171}  & \cell{57.430} \\ \hline
\cell{IH}        & \cell{21}& \cell{10}& \cell{11}& \cell{11}& \cell{21}& \cellbf{1.9797} & \cellbf{-} & \cellbf{1.1858} & \cell{114}  & \cell{1206.340} \\
\hline
\cell{CSE1}      & \cell{20}& \cell{10}& \cell{2} & \cell{12}& \cell{1} & \cellbf{0.0201} & \cellbf{-} & \cellbf{0.0220} & \cell{3}    & \cell{20.250} \\ \hline
\cell{PSM}       & \cell{7} & \cell{3} & \cell{2} & \cell{5} & \cell{2} & \cellbf{0.9202} & \cellbf{-} & \cellbf{0.9227} & \cell{87}   & \cell{67.470} \\ \hline
\cell{EB1}       & \cell{10}& \cell{1} & \cell{1} & \cell{2} & \cell{2} & \cellbf{3.1225} & \cellbf{39.9526} & \cellbf{2.0276} & \cell{300}  & \cell{295.420} \\
\hline
\cell{EB2}       & \cell{10}& \cell{1} & \cell{1} & \cell{2} & \cell{2} & \cellbf{2.0201} & \cellbf{39.9547} & \cellbf{0.8148} & \cell{84} & \cell{73.770} \\
\hline
\cell{EB3}       & \cell{10}& \cell{1} & \cell{1} & \cell{2} & \cell{2} & \cellbf{2.0575} & \cellbf{3995311.0743} & \cellbf{0.8153} & \cell{84} & \cell{75.820}
\\ \hline
\cell{NN1}       & \cell{3} & \cell{2} & \cell{1} & \cell{3} & \cell{3} & \cellbf{13.9782}& \cellbf{14.6882} & \cellbf{18.4813}& \cell{300} & \cell{144.940} \\
\hline
\cell{NN2}       & \cell{2} & \cell{1} & \cell{1} & \cell{2} & \cell{2} & \cellbf{2.2216} & \cellbf{-} & \cellbf{2.2216} & \cell{9} & \cell{4.060} \\ \hline
\cell{NN4}       & \cell{4} & \cell{3} & \cell{2} & \cell{4} & \cell{4} & \cellbf{1.3627} & \cellbf{-} & \cellbf{1.3802} & \cell{156} & \cell{51.480} \\ \hline
\cell{NN8}       & \cell{3} & \cell{2} & \cell{2} & \cell{3} & \cell{3} & \cellbf{2.8871} & \cellbf{78281181.1490} & \cellbf{2.9345} & \cell{180} &
\cell{51.830} \\ \hline
\cell{NN9$^{b}$} & \cell{5} & \cell{2} & \cell{3} & \cell{4} & \cell{2} & \cellbf{28.9083}& \cellbf{-} & \cellbf{32.1222}& \cell{300} & \cell{129.920} \\ \hline
\cell{NN11$^{b}$}& \cell{16}& \cell{5} & \cell{3} & \cell{3} & \cell{3} & \cellbf{0.1037} & \cellbf{-} & \cellbf{0.1566} & \cell{9} & \cell{366.890} \\ \hline
\cell{NN15}      & \cell{3} & \cell{2} & \cell{2} & \cell{4} & \cell{1} & \cellbf{0.1039} & \cellbf{-} & \cellbf{0.1194} & \cell{6} & \cell{3.740} \\ \hline
\cell{NN16}      & \cell{8} & \cell{4} & \cell{4} & \cell{4} & \cell{8} & \cellbf{0.9557} & \cellbf{-} & \cellbf{0.9656} & \cell{48} & \cell{37.950} \\ \hline
\cell{NN17}      & \cell{3} & \cell{1} & \cell{2} & \cell{2} & \cell{1} & \cellbf{11.2182}& \cellbf{-} & \cellbf{11.2381}& \cell{117} & \cell{32.160} \\ \hline
\end{tabular}
\end{scriptsize}
\end{center}
\vskip -0.4cm
\end{table}
The numerical results computed by HIFOO and PENBMI are also included in Table \ref{tb:H_inf_problems}.
Here, the notation is the same as in Table \ref{tb:H2_prob}, except that $\mathcal{H}_{\infty}$ denotes the $\mathcal{H}_{\infty}$-norm of the closed-loop
system for the static output feedback
controller. We can see from Table \ref{tb:H_inf_problems} that the optimal values reported by Algorithm \ref{alg:A1} and HIFOO are almost similar for many
problems whereas in general PENBMI has
difficulties in
finding a feasible solution.

\subsection{$\mathcal{H}_2/\mathcal{H}_{\infty}$ optimization: BMI formulation}\label{subsec:H2Hinf_prob}
Motivated from the $\mathcal{H}_2$ and $\mathcal{H}_{\infty}$ optimization problems, in this subsection we consider the mixed
$\mathcal{H}_2/\mathcal{H}_{\infty}$ synthesis problem.
Let us assume that $D_{11} = 0$, $D_{21} = 0$ and the performance output $z$ is divided in two components, $z_1$ and $z_2$.
Then the linear system \eqref{eq:LTI} becomes:
\begin{equation}\label{eq:mixed_LTI_z1z2}
\left\{\begin{array}{cl}
&\dot{x} = Ax + B_1w + Bu,\\
&z_1 = C_1^{z_1}x + D_{12}^{z_1}u,\\
&z_2 = C_1^{z_2}x + D_{12}^{z_2}u,\\
&y   = Cx.
\end{array}\right.
\end{equation}
The mixed $\mathcal{H}_2/\mathcal{H}_{\infty}$ control problem is to find a static output feedback gain $F$ such that, for $u = Fy$, the $\mathcal{H}_2$-norm of
the closed loop from $w$ to $z_2$ is
minimized, while the $\mathcal{H}_{\infty}$-norm from $w$ to $z_1$ is less than some imposed level $\gamma$ \cite{Boyd1994,Leibfritz2004,Ostertag2008}.

This problem leads to the following optimization problem with BMI constraints \cite{Ostertag2008}:
\begin{equation}\label{eq:H2Hinf_prob}
\begin{array}{cl}
\displaystyle\min_{F, P_1,P_2,Z} \!\!\!\!\!\!\!\! & \textrm{trace}(Z)\\
\textrm{s.t.}\!\!\!\!\!\!\!\!  &\begin{bmatrix}A_F^TP_1 + P_1A_F + (C_{F}^{z_1})^TC_{F}^{z_1} & P_1B_1\\ B_1^TP_1 & -\gamma^2I\end{bmatrix}\prec 0,\\
&\begin{bmatrix}A_F^TP_2 + P_2A_F & P_2B_1\\B_1^TP_2 & -I\end{bmatrix}\prec 0, ~~
\begin{bmatrix}P_2 & (C_{F}^{z_2})^T\\C_{F}^{z_2} & Z \end{bmatrix}\succ 0, ~ P_1 \succ 0, ~P_2\succ 0,
\end{array}
\end{equation}
where $A_F := A + BFC$, $C_{F}^{z_1} := C_1^{z_1} + D_{12}^{z_1}FC$ and $C_{F}^{z_2} := C_1^{z_2} + D_{12}^{z_2}FC$.
Note that if $C = I_{n_x}$, the identity matrix, then this problem becomes a mixed $\mathcal{H}_2/\mathcal{H}_{\infty}$ of static state feedback design problem
considered in \cite{Ostertag2008}.
In this subsection, we test Algorithm \ref{alg:A1} for the static state feedback and output feedback cases.

\vskip 0.2cm
\noindent\textbf{Case 1. }\textit{The static state feedback case ($C = I_{n_x}$)}.
First, we apply the method in \cite{Ostertag2008} to find an initial point via solving two optimization problems with LMI constraints.
Then, we use the same approach as in the previous subsections to transform problem \eqref{eq:H2Hinf_prob} into an optimization problem with psd-convex-concave
matrix inequality constraints.
Finally, Algorithm \ref{alg:A1} is implemented to solve the resulting problem. For convenience of implementation, we introduce a slack variable $\eta$ and
then replace the objective function in
\eqref{eq:BMI_Hinf_prob} by $f(x) = \eta^2$ with an additional constraint $\textrm{trace}(Z) \leq \eta^2$.

In the first case, we test Algorithm \ref{alg:A1} with three problems. The first problem was also considered in \cite{Hassibi1999} with
\begin{align*}
&A   = {\footnotesize\begin{bmatrix}-1.40 & -0.49 & -1.93 \\ -1.73 & -1.69 & -1.25 \\ 0.99 & 2.08 & -2.49\end{bmatrix}}, ~
B_1  = {\footnotesize\begin{bmatrix}-0.16 & -1.29 \\ 0.81 & 0.96 \\ 0.41 & 0.65 \end{bmatrix}}, ~ B = {\footnotesize\begin{bmatrix} 0.25 \\ 0.41 \\ 0.65
\end{bmatrix}},\\
&C_1^{z_1} = {\footnotesize\begin{bmatrix} -0.41 & 0.44 & 0.68 \end{bmatrix}}, ~ C_1^{z_2} = {\footnotesize\begin{bmatrix} -1.77 & 0.50 & -0.40
\end{bmatrix}}, ~ D_{12}^{z_1} = D_{12}^{z_2} = 1,
~\textrm{and}~ \gamma = 2.
\end{align*}
If the tolerance $\varepsilon = 10^{-3}$ is chosen then Algorithm \ref{alg:A1} converges after $17$ iterations and reports the value $\eta = 0.7489$ with
$F = {\footnotesize \begin{bmatrix}
1.9485  & 0.3990  & -0.2119
\end{bmatrix}.}$
This result is similar to the one shown in \cite{Ostertag2008}.
If we regularize the subproblem \eqref{eq:psd_convex_subprob} with $\rho = 0.5\times 10^{-3}$ and $Q=I_{PF}$ then the number of iterations is reduced to $10$
iterations.

The second problem is \texttt{DIS4} in COMP$\textrm{l}_{\textrm{e}}$ib \cite{Leibfritz2003}. In this problem, we set $C_1^{z_1} = C_1^{z_2}$ and $D_{12}^{z_1}
= D_{12}^{z_2}$ as in
\cite{Ostertag2008}. Algorithm \ref{alg:A1} converges after $24$ iterations with the same tolerance $\varepsilon = 10^{-3}$. It reports $\eta = 1.6925$ and
$\gamma = 1.1996$ with
\begin{equation*}
F = {\footnotesize\begin{bmatrix}
  -0.8663 \!&\!  -0.6504 \!&\!  -1.1115 \!&\!  -0.1951 \!&\!  -0.6099 \!&\!   0.2065 \\
   0.1591 \!&\!  -0.4941 \!&\!  -0.6322 \!&\!  -0.5409 \!&\!  -1.2895 \!&\!   0.2774 \\
  -0.7017 \!&\!  -0.0785 \!&\!   0.6121 \!&\!  -0.8919 \!&\!   0.2518 \!&\!  -0.2354 \\
  -0.0522 \!&\!  -0.5556 \!&\!  -0.5838 \!&\!   0.4497 \!&\!  -1.4279 \!&\!  -0.6677
\end{bmatrix}}.
\end{equation*}
If we regularize the subproblem \eqref{eq:psd_convex_subprob} with $\rho = 0.5\times 10^{-3}$ and $Q= I_{PF}$ then the number of iterations is  $18$
iterations.

The third problem is \texttt{AC16} in COMP$\textrm{l}_{\textrm{e}}$ib \cite{Leibfritz2003}. In this example we also choose $C_1^{z_1} = C_1^{z_2}$ and
$D_{12}^{z_1} = D_{12}^{z_2}$ as in the previous
problem.
As mentioned in \cite{Ostertag2008}, if we choose a starting value $\gamma_0 = 100$, then the LMI problem can not be solved by the SDP solvers (e.g., Sedumi,
SDPT3) due to numerical problems.
Thus, we rescale the LMI constraints using the same trick as in \cite{Ostertag2008}.
After doing this, Algorithm \ref{alg:A1} converges after $298$ iterations with the same tolerance $\varepsilon = 10^{-3}$. The value of $\eta$  reported in
this case is $\eta = 12.3131$ and
$\gamma = 20.1433$  with
\begin{equation*}
F = {\footnotesize\begin{bmatrix}
   -1.8533 &  0.1737 &  0.6980 &  6.4208 \\
    4.2672 & -0.9668 & -1.5952 & -2.9240
\end{bmatrix}}.
\end{equation*}
The results obtained by Algorithm \ref{alg:A1} for solving problems \texttt{DIS4} and \texttt{AC16} in this paper confirm the results reported in
\cite{Ostertag2008}.

\noindent\textbf{Case 2. }\textit{The static output feedback case.}
Similarly to the previous subsections, we first propose a technique to determine a starting point for Algorithm \ref{alg:A1}.
We described this phase algorithmically as follows.

\noindent\textbf{Phase 1. }(\textit{Determine a starting point $x^0$}).

\noindent\textit{Step 1.} If $\alpha_0(A^T+A) < 0$ then set $F^0 = 0$. Otherwise, go to Step 3.

\noindent\textit{Step 2.} Solve the following linear SDP problem:
\begin{equation}\label{eq:H2Hinf_LMI_prob}
\begin{array}{cl}
\displaystyle\min_{P_1,P_2, Z} \!\!\!\!\!\!\!\! & \textrm{trace}(Z)\\
\textrm{s.t.}\!\!\!\!\!\!\!\!  &\begin{bmatrix}A_{F^0}^TP_1 + P_1A_{F^0} + (C_{F^0}^{z_1})^TC_{F^0}^{z_1} & P_1B_1\\ B_1^TP_1 & -\gamma^2I\end{bmatrix}\prec
0,\\
&\begin{bmatrix}A_{F^0}^TP_2 + P_2A_{F^0} & P_2B_1\\B_1^TP_2 & -I\end{bmatrix}\prec 0, ~\begin{bmatrix}P_2 & (C_{F^0}^{z_2})^T\\C_{F^0}^{z_2} & Z
\end{bmatrix}\succ 0, ~~ P_1 \succ 0, ~P_2\succ 0,
\end{array}
\end{equation}
where $A_{F^0} = A + BF^0C$, $C_{F^0}^{z_1} = C_1^{z_1}+D_{12}^{z_1}F^0C$ and $C_{F^0}^{z_2} = C_1^{z_2}+D_{12}^{z_2}F^0C$. If this problem has an optimal
solution $P_1^0, P_2^0$ and
$Z^0$ then terminate Phase 1. Set $x^0 := (F^0, P_1^0, P_2^0, Z^0)$ for a starting point of Algorithm \ref{alg:A1}. Otherwise go to Step 3.

\noindent\textit{Step 3. } Solve the following LMI feasibility problem:
\begin{equation*}\label{eq:H2Hinf_LMI_phase1}
\begin{array}{cl}
&\textrm{Find $Q\succ 0$, $W$ and $Z$ such that:}\\
&~~\begin{bmatrix}
AQ \!+\! QA^T\!\! +\! BW \!+\! W^T\!B^T \!\!&\!\! B_1 \!\!&\!\! (C_1 \!+\! D_{12}W)^T\!\! \\\!\! B_1^T \!\!&\!\! -I_w \!\!&\!\! O \!\!\\ \!\!C_1 \!+\! D_{12}W
\!\!&\!\! O &\!\! -\gamma^2I_z\!\!
\end{bmatrix} \!\prec\! 0,\\
&~~\begin{bmatrix}AQ + QA^T + BW + W^TB^T & B_1 \\ B_1^T & -I_w\end{bmatrix}\prec 0,~
\begin{bmatrix}Q & (C_1Q + D_{12}W)^T \\C_1Q + D_{12}W & Z \end{bmatrix}\succ 0,\\
\end{array}
\end{equation*}
to obtain a solution $Q^{*}$, $W^{*}$ and $Z^{*}$. Set $F^{*} := W^{*}(Q^{*})^{-1}C^{+}$, where $C^{+}$ is the pseudo-inverse of $C$. Solve again problem
\eqref{eq:H2Hinf_LMI_prob} with
$F^0 := F^{*}$. If problem \eqref{eq:H2Hinf_LMI_prob} has solution then terminate Phase 1. Otherwise, perform Step 4.

\noindent\textit{Step 4. } Solve the following optimization with BMI constraints:
\begin{equation}\label{eq:H2Hinf_BMI_prob}
\begin{array}{cl}
\displaystyle\max_{\beta, F, P_1,P_2} \!\!\!\!\!\!\!\! &\!\!\! \beta\\
\!\!\!\!\!\!\!\!\textrm{s.t.}\!\!\!\!\!\!\!\! & \!\!\!P_1 \succ 0, ~P_2\succ 0,\\
\!\!\!\!\!\!\!\! & \!\! A_F^T\!P_1 \!+\! P_1A_F^T \! +\! (C_{F}^{z_1})^T\!C_{F}^{z_1} + \frac{1}{\gamma^2}P_1B_1B_1^TP_1\preceq -2\beta P_1,\\
\!\!\!\!\!\!\!\! & \!\! A_F^TP_2 + P_2A_F + P_2B_1B_1^TP_2 \preceq -2\beta P_2
\end{array}
\end{equation}
to obtain an optimal solution $F^{*}$ corresponding to the optimal value $\beta^{*}$.
If $\beta^{*} > 0$ then set $F^0 := F^{*}$ and go back to Step 2 to determine $P_1^0$, $P_2^0$ and $Z^0$. Otherwise, declare that no strictly feasible point of
problem \eqref{eq:H2Hinf_prob} is found.
\eofproof

Since at Step 4 of Phase 1, it requires to solve an optimization problem with two BMI constrains. This task is usually expensive. In our implementation, we only
terminate this step after find a
strictly feasible point with a feasible gap $0.1$.
If matrix $C$ is invertible then the matrix $F^{*}$ at Step 3 is $F^{*} = W^{*}(Q^{*})^{-1}C^{-1}$. Hence, we can ignore Step 4 of Phase 1.

To avoid the numerical problem in Step 3, we can reformulate problem \eqref{eq:H2Hinf_LMI_phase1} equivalently to the following one:
\begin{equation*}\label{eq:H2Hinf_LMI2_phase1}
\begin{array}{cl}
&\textrm{Find $Q\succ 0$, $W$ and $Z$ such that:}\\
&~\begin{bmatrix}
AQ \!+\! QA^T\!\! +\! BW \!+\! W^T\!B^T \!\!\!\!&\!\!\!\! B_1 \!\!&\!\! (C_1 \!+\! D_{12}W)^T\!\! \\\!\! B_1^T \!\!\!\!&\!\!\!\! -\gamma I_w \!\!&\!\! O \!\!\\
\!\!C_1 \!+\! D_{12}W \!\!&\!\! O &\!\!
-\gamma I_z\!\!
\end{bmatrix} \!\prec\! 0,~
\begin{bmatrix}Q \!\!\! &\!\!\! (C_1Q + D_{12}W)^T \\C_1Q + D_{12}W \!\!\!&\!\!\! Z \end{bmatrix} \! \succ\! 0.\\
\end{array}
\end{equation*}
We test the algorithm described above for several problems in COMP$\textrm{l}_{\textrm{e}}$ib with the level values $\gamma = 4$ and $\gamma = 10$. In these
examples, we assume that the output signals
$z_1\equiv z_2$. Thus we have $C_1^{z_1} = C_1^{z_2} = C_1$ and $D_{12}^{z_1} = D_{12}^{z_2} = D_{12}$.
The parameters and the stopping criterion of the algorithm are chosen as in Subsection \ref{subsec:H2_prob}. The computational results are reported in Table
\ref{tb:H2Hinf_problems} with $\gamma = 4$
and $\gamma = 10$.
\begin{table}[!ht]
\begin{center}
\renewcommand{\arraystretch}{0.7}
\begin{scriptsize}
\caption{$\mathcal{H}_2/\mathcal{H}_{\infty}$ synthesis benchmarks on COMP$\textrm{l}_{\textrm{e}}$ib plants}\label{tb:H2Hinf_problems}
\newcommand{\cell}[1]{{\!}#1{\!}}
\newcommand{\cellbf}[1]{{\!}\textbf{#1}{\!}}
\begin{tabular}{|l|r|r|r|r|r|r|r|}\hline
\multicolumn{1}{|c|}{\!\!\!\!\! Problem\!\!\!\!} & \multicolumn{3}{c|}{\!\!\!\!\!{Results and Performances ($\gamma=4$)}\!\!\!\!\!} &
\multicolumn{3}{|c|}{\!\!\!\!\!{Results and
Performances ($\gamma=10$)}\!\!\!\!\!} \\ \hline
\texttt{Name}\!\! &\!\!\!$\mathcal{H}_2/\mathcal{H}_{\infty}$~~~~~~\!\!\!& \!\!\!\textrm{iter}\!\!\! & \!\!\!\texttt{time\![s]}\! &
\!$\mathcal{H}_2/\mathcal{H}_{\infty}$~~~~~\!&
\!\!\! \texttt{iter} \!\!\! & \!\!\texttt{time\![s]}\!\!\! \\ \hline
\cell{{~}AC1} & \cellbf{0.0585~/~0.0990}  & \cell{3}  & \cell{4.22}   & \cellbf{0.0585~/~0.0990} & \cell{3}  & \cell{4.27} \\ \hline 
\cell{{~}AC2} & \cellbf{0.1067~/~0.1723}  & \cell{6}  & \cell{7.31}   & \cellbf{0.1070~/~0.1727} & \cell{3}  & \cell{7.15} \\ \hline 
\cell{{~}AC3} &\cellbf{5.2770~/~3.9999}   & \cell{51} & \cell{281.53} & \cellbf{4.5713~/~5.1298} & \cell{18} & \cell{19.18} \\ \hline
\cell{{~}AC6} & \cellbf{-~/~-}            & \cell{- } & \cell{-}      & \cellbf{4.0297~/~4.8753} & \cell{283}& \cell{330.64} \\ \hline
\cell{{~}AC7} & \cellbf{0.0415~/~0.0961}  & \cell{1}  & \cell{3.39}   & \cellbf{0.0420~/~0.1286} & \cell{2}  & \cell{3.91} \\ \hline
\cell{{~}AC8} & \cellbf{1.2784~/~2.2288}  & \cell{43} & \cell{60.78}  & \cellbf{1.3020~/~2.5719} & \cell{23} & \cell{31.59} \\ \hline
\cell{{~}AC11}& \cellbf{4.0704~/~4.0000}  & \cell{76} & \cell{175.75} & \cellbf{4.0021~/~5.1949} & \cell{117}& \cell{122.86} \\ \hline
\cell{{~}AC12}& \cellbf{0.0924~/~0.3486}  & \cell{18} & \cell{73.46}  & \cellbf{1.4454~/~1.6444} & \cell{300}& \cell{234.13} \\ \hline
\cell{{~}AC17}& \cellbf{-~/~ - }          & \cell{-}  & \cell{-}      & \cellbf{4.1228~/~6.6472} & \cell{2} & \cell{11.620} \\ \hline
\cell{{~}HE1} & \cellbf{0.1123~/~0.2257}  & \cell{2}  & \cell{131.18} & \cellbf{0.0973~/~0.2080} & \cell{1}  & \cell{30.97} \\ \hline
\cell{{~}HE2} & \cellbf{- ~/~ -}          & \cell{-}  & \cell{-}      & \cellbf{4.7302~/~9.8931} & \cell{75} & \cell{55.48} \\ \hline
\cell{{~}REA1}& \cellbf{1.8214~/~1.4740}  & \cell{30} & \cell{25.64}  & \cellbf{1.8213~/~1.4730} & \cell{30} & \cell{26.65} \\ \hline
\cell{{~}REA2}& \cellbf{3.5014~/~3.5180}  & \cell{42} & \cell{22.09}  & \cellbf{3.5015~/~3.5209} & \cell{45} & \cell{23.26} \\ \hline
\cell{{~}DIS1}& \cellbf{- ~/~ -}          & \cell{-}  & \cell{-}      & \cellbf{2.8505~/~4.7904} & \cell{15} & \cell{30.51} \\ \hline
\cell{{~}DIS2}& \cellbf{1.5079~/~1.8500}  & \cell{18} & \cell{7.92}   & \cellbf{1.5079~/~1.8520} & \cell{21} & \cell{7.92} \\ \hline
\cell{{~}DIS3}& \cellbf{2.0577~/~1.7934}  & \cell{27} & \cell{25.03}  & \cellbf{2.0577~/~1.7766} & \cell{30} & \cell{24.54} \\ \hline
\cell{{~}DIS4}& \cellbf{1.6926~/~1.1952}  & \cell{21} & \cell{18.62}  & \cellbf{1.6926~/~1.2009} & \cell{24} & \cell{21.55} \\ \hline
\cell{{~}AGS} & \cellbf{- ~ / ~ -}        & \cell{-}  & \cell{-}      & \cellbf{7.0332~/~8.2035} & \cell{8}  & \cell{196.73} \\ \hline
\cell{{~}PSM} & \cellbf{1.5115~/~0.9248}  & \cell{177}& \cell{160.41} & \cellbf{1.5115~/~0.9248} & \cell{180}& \cell{167.31} \\ \hline
\cell{{~}EB2} & \cellbf{0.7765~/~1.0828}  & \cell{7}  & \cell{9.70}   & \cellbf{0.7768~/~1.0867} & \cell{10} & \cell{13.16} \\ \hline
\cell{{~}EB3} & \cellbf{0.8406~/~0.9249}  & \cell{1}  & \cell{3.21}   & \cellbf{0.8383~/~0.9418} & \cell{1}  & \cell{2.93} \\ \hline
\cell{{~}EB4} & \cellbf{1.0147~/~1.0707}  & \cell{6}  & \cell{59.55}  & \cellbf{0.9981~/~1.2146} & \cell{12} & \cell{111.26} \\ \hline
\cell{{~}NN2} & \cellbf{1.5651~/~2.4834}  & \cell{12} & \cell{5.37}   & \cellbf{1.5651~/~2.4876} & \cell{12} & \cell{5.49} \\ \hline
\cell{{~}NN4} & \cellbf{1.8778~/~2.0501}  & \cell{202}& \cell{154.49} & \cellbf{1.8779~/~2.0519} & \cell{213}& \cell{161.00} \\ \hline
\cell{{~}NN8} & \cellbf{2.3609~/~3.9999}  & \cell{21} & \cell{15.71}  & \cellbf{2.3376~/~4.6514} & \cell{15} & \cell{6.57} \\ \hline
\cell{{~}NN15}& \cellbf{0.0820~/~0.1010}  & \cell{42} & \cell{18.75}  & \cellbf{0.0771/0.1012}   & \cell{24} & \cell{10.47} \\ \hline
\cell{{~}NN16}& \cellbf{0.3187~/~0.9574}  & \cell{90} & \cell{96.44}  & \cellbf{0.3319~/~0.9572} & \cell{258}& \cell{303.87} \\ \hline
\end{tabular}
\end{scriptsize}
\end{center}
\vskip -0.4cm
\end{table}
Here, $\mathcal{H}_2/\mathcal{H}_{\infty}$ are the $\mathcal{H}_2$ and $\mathcal{H}_{\infty}$ norms of the closed-loop systems for the static output feedback
controller, respectively.
With $\gamma = 10$, the computational results show that Algorithm \ref{alg:A1} satisfies the condition $\norm{P_{\infty}(s)}_{\infty} \leq \gamma = 10$ for all
the test problems.
While, with $\gamma = 4$, there are $5$ problems reported infeasible, which are denoted by ``-''. The $\mathcal{H}_{\infty}$-constraint of three problems: AC3,
AC11 and NN8 is active with respect to
$\gamma = 4$.

\section{Concluding remarks}
We have proposed a new algorithm for solving many classes of optimization problems involving BMI constraints arising in static feedback controller design. The
convergence of the algorithm has been
proved under standard assumptions. Then, we have applied our method to design static feedback controllers for various problems in robust control design.
The algorithm is easy to implement using the current SDP software tools. The numerical results are also reported for the benchmark collection in
COMP$\textrm{l}_{\textrm{e}}$ib.
Note, however, that our method depends crucially on the psd-convex-concave decomposition of the BMI constraints. In practice, it is important to look at the
specific structure of the problems and find
an appropriate psd-convex-concave decomposition for Algorithm \ref{alg:A1}. The method proposed can be extended to general nonlinear semidefinite programming,
where the psd-convex-concave
decomposition of the nonconvex mappings are available. From a control design point of view, the application to more general reduced order controller synthesis
problems  and the extension towards
linear parameter varying  or time-varying systems are future research directions.

\section{Acknowledgments}
{\footnotesize
Research supported by Research Council KUL: CoE EF/05/006 Optimization in Engineering(OPTEC), IOF-SCORES4CHEM, GOA/10/009 (MaNet), GOA/10/11, several
PhD/postdoc and fellow grants; Flemish
Government:
FWO: PhD / postdoc grants, projects G.0452.04, G.0499.04, G.0211.05, G.0226.06,
G.0321.06, G.0302.07, G.0320.08, G.0558.08, G.0557.08, G.0588.09, G.0377.09, G.0712.11, research communities (ICCoS, ANMMM, MLDM); IWT: PhD Grants, Belgian
Federal Science Policy Office: IUAP P6/04; EU: ERNSI; FP7-HDMPC, FP7-EMBOCON, Contract Research: AMINAL. Other: Helmholtz-viCERP, COMET-ACCM.
}

\appendix
\subsection*{Proof of Lemma \ref{le:descent_dir}.}\label{app:A1}
For any matrices $A, B\in\mathcal{S}^p_{+}$, we have $A\circ B \geq 0$.
From \textit{Step 1} of Algorithm \ref{alg:A1}, we have $x^{k+1}$ is the solution of the convex subproblem \eqref{eq:psd_convex_subprob} and $\Lambda^{k+1}$ is
the corresponding multiplier, under
Assumption \ref{as:A3}, they must satisfy the following generalized Kuhn-Tucker condition:
\begin{equation}\label{eq:KT_cond}
\left\{\begin{array}{cl}
&\!\! 0\in \nabla f(x^{k\!+\!1}) \!+\! \rho_kQ^T_kQ_k(x^{k\!+\!1} \!-\! x^k) \!+\! \left\{\sum_{i=1}^lD[G_i(x) \!-\! H_i(x^k)\right.\\
&\!\! \left.- DH_i(x^k)(x-x^k)]|_{x^{k+1}}\right\}^{*}\Lambda^{k+1}_i + N_{\Omega}(x^{k+1}),\\
&\!\!G_i(x^{k+1}) - H_i(x^k) - DH(x^k)(x^{k+1}-x^k) \preceq 0, ~~ \Lambda_i \succeq 0,\\
&\!\! \left[\!G_i(x^{k\!+\!1}) \!-\! H_i(x^k) \!-\! DH(x^k)(x^{k\!+\!1}\!-\!x^k)\!\right]\circ \Lambda_i^{k\!+\!1} = 0.
\end{array}\right.
\end{equation}
Noting that $D\left[G_i(x) - H_i(x^k) - DH_i(x^k)(y-x^{k})\right]|_{x=x^{k+1}} = DG_i(x^{k+1}) - DH_i(x^k)$ for $i=1,\dots l$, it follows from the first line of
\eqref{eq:KT_cond} and the convexity of
$f$ that
\begin{align}\label{eq:term1}
&f(y) \!-\! f(x^{k\!+\!1}) \!+\! \left\{\sum_{i=1}^l[DG_i(x^{k\!+\!1})\!-\!DH_i(x^k)]^{*}\Lambda_i^{k+1}\right\}^T\!\!\!(y-x^{k+1})\notag\\
&\geq \left\{\nabla f(x^{k+1}) + \sum_{i=1}^l[DG_i(x^{k+1})-DH_i(x^k)]^{*}\Lambda_i^{k+1}\right\}^T\!\!\!(y-x^{k+1})\notag\\
&+\! \frac{\rho_f}{2}\norm{y \!-\! x^{k\!+\!1}}_2^2 \! \geq\! \frac{\rho_f}{2}\norm{y \!-\! x^{k\!+\!1}}^2_2 \!+\!
\rho_k(y\!-\!x^{k\!+\!1})^TQ_k^TQ_k(x^k\!-\!x^{k\!+\!1}),  ~~\forall y\in\Omega.
\end{align}
On the other hand, we have
\begin{align}\label{eq:term2}
&\left\{ [DG_i(x^{k + 1})  - DH_i(x^k)]^{*}\Lambda_i^{k + 1}\right\}^T\!(y - x^{k + 1}) \nonumber\\
& =  \Lambda_i^{k + 1} \circ
[DG_i(x^{k + 1})(y  - x^{k + 1}) - DH_i(x^k)(y - x^{k + 1})].
\end{align}
Since $G_i$ and $H_i$ are psd-convex, applying Lemma \ref{le:psd_convex_properties} we have
\begin{align*}
& G_i(x^k) \!-\! G_i(x^{k\!+\!1}) \!\succeq\! DG_i(x^{k\!+\!1}\!)(x^k \!-\! x^{k\!+\!1}), \nonumber\\
\textrm{and}~& H_i(x^{k\!+\!1}) \!-\! H_i(x^k) \!\succeq\! DH_i(x^k)(x^{k\!+\!1} \!-\! x^k),
~i=1,\dots, l.
\end{align*}
Summing up these inequalities we obtain
\begin{align*}
G_i(x^k) \!-\! H_i(x^k) \!-\! [G_i(x^{k\!+\!1}) \!-\! H_i(x^{k\!+\!1})]  \!\succeq\! [DG_i(x^{k \!+\! 1})(x^k \!-\! x^{k+1}) \!-\!
DH_i(x^k)(x^k-x^{k+1})].
\end{align*}
Using the fact that $\Lambda^{k+1}_i\succeq 0$, this inequality implies that
\begin{align}\label{eq:term3}
& \Lambda^{k + 1}_i  \circ  \left\{ G_i(x^k) - H_i(x^k)  - [G_i(x^{k + 1} ) -  H_i(x^{k\!+\!1})] \right\} \nonumber\\
& \succeq  \Lambda^{k\!+\!1}_i  \circ [DG_i(x^{k\!+\!1})(x^k - x^{k\!+\!1})  -  DH_i(x^k)(x^k  -  x^{k\!+\!1})].
\end{align}
Substituting $y=x^k$ into \eqref{eq:term1} and then combining the consequence, \eqref{eq:term2}, \eqref{eq:term3} and the last line of
\eqref{eq:KT_cond} to obtain
\begin{align}\label{eq:term4}
f(x^k) \!-\! f(x^{k\!+\!1}) \!+\! \sum_{i=1}^l\Lambda_i^{k\!+\!1}\circ[G_i(x^k)-H_i(x^k)] \geq \frac{\rho_f}{2}\norm{x^{k \!+ \! 1}-x^k}_2^2 \!+\!
\rho_k\norm{Q_k(x^{k\!+\! 1} \!-\! x^k)}_2^2.
\end{align}
Now, since $x^k$ is the solution of the convex subproblem \eqref{eq:psd_convex_subprob} linearized at $x^{k-1}$. One has
$G_i(x^k) - H_i(x^k) \preceq 0$.
Moreover, since $\Lambda^{k+1}_i\succeq 0$, we have $\Lambda^{k+1}_i\circ\left[G_i(x^k) - H_i(x^k)\right] \leq 0$.
Substituting this inequality into \eqref{eq:term4}, we obtain
\begin{equation*}
f(x^k) - f(x^{k+1}) \geq \frac{\rho_f}{2}\norm{x^k-x^{k+1}}_2^2 + \rho_k\norm{Q_k(x^{k+1}-x^k)}_2^2.
\end{equation*}
This inequality is indeed \eqref{eq:descent_dir} which proves the item a).
If there exists at least one $i_0\in \left\{1,\dots, l\right\}$ such that $G_{i_0}(x^k) - H_{i_0}(x^k) \prec 0$ and $\Lambda^{k+1}_{i_0} \succ 0$ then
$\Lambda_{i_0}^{k+1}\circ\left[G_{i_0}(x^k) -
H_{i_0}(x^k)\right] < 0$. Substituting this inequality into \eqref{eq:term4} we conclude that $f(x^{k+1}) < f(x^k)$ which proves item b).
The last statement c) follows directly from the inequality \eqref{eq:descent_dir}.
\eofproof



\end{document}